    \def\myfigpdf#1#2{\includegraphics[height=#2]{#1.pdf}}
\newtheorem{theorem}{Theorem}[section]
\newtheorem{corollary}[theorem]{Corollary}
\newtheorem{lemma}[theorem]{Lemma}
\newtheorem{remark}[theorem]{Remark}
\numberwithin{equation}{section}  
  \newcounter{mnote}
  \let\oldmarginpar\marginpar
    \renewcommand\marginpar[1]{\-\oldmarginpar[\raggedleft\footnotesize #1]%
    {\raggedright\footnotesize #1}}
\definecolor{myblue}{rgb}{0.2,0.2,0.7}
\definecolor{mygreen}{rgb}{0,0.6,0}
\definecolor{mycyan}{rgb}{0,0.6,0.6}
\definecolor{myred}{rgb}{0.9,0.2,0.2}
\definecolor{mymagenta}{rgb}{0.9,0.2,0.9}
\definecolor{mywhite}{rgb}{1.0,1.0,1.0}
\definecolor{myblack}{rgb}{0.0,0.0,0.0}
\newcommand{\beq}{\begin{equation}}
\newcommand{\eeq}{\end{equation}}
\newcommand{\beqa}{\begin{eqnarray}}
\newcommand{\eeqa}{\end{eqnarray}}
\def\bold#1{{\mathbf #1}}
\def\text#1{{\mathrm #1}}
\def\calg#1{{\mathcal #1}}
\def\bbbb#1{{\mathbb #1}}
\begin{document}

\title[Schwarz Methods: To Symmetrize or Not to Symmetrize]
      {Schwarz Methods: To Symmetrize or Not to Symmetrize}

\author[M. Holst]{Michael Holst}
\email{holst@ama.caltech.edu}

\author[S. Vandewalle]{Stefan Vandewalle}

\address{Applied Mathematics 217-50, Caltech, Pasadena, CA 91125, USA.}

\thanks{This work was supported in part by the NSF under Cooperative Agreement No.~CCR-9120008.}

\date{August 7, 1995}

\keywords{multigrid, domain decomposition, Krylov methods, Schwarz methods, conjugate gradients, Bi-CGstab.}

\begin{abstract}
A preconditioning theory is presented which establishes sufficient conditions 
for multiplicative and additive Schwarz algorithms to yield self-adjoint 
positive definite preconditioners.
It allows for the analysis and use of non-variational and non-convergent 
linear methods as preconditioners for conjugate gradient methods, and it is 
applied to domain decomposition and multigrid. 
It is illustrated why symmetrizing may be a bad idea 
for linear methods. It is conjectured that enforcing minimal symmetry 
achieves the best results when combined with conjugate gradient acceleration.
Also, it is shown that absence of symmetry in the linear preconditioner 
is advantageous when the linear method is accelerated by using the
Bi-CGstab method.
Numerical examples are presented for two test problems 
which illustrate the theory and conjectures.
\end{abstract}

\maketitle


{\footnotesize
\tableofcontents
}
\vspace*{-0.5cm}

\section{Introduction}
Domain decomposition (DD) and multigrid (MG) methods have been studied 
extensively in recent years, both from a theoretical and numerical point
of view.
DD methods were first proposed in 1869 by H. A. Schwarz as a theoretical 
tool in the study of elliptic problems on non-rectangular 
domains~\cite{Schw69}.
More recently, DD methods have been reexamined for use as practical 
computational tools in the (parallel) solution of general elliptic equations 
on complex domains~\cite{KeXu95}. 
MG methods were discovered much more recently~\cite{Fedo61}. 
They have been extensively developed both theoretically and practically since 
the late seventies~\cite{Bran77,Hack85}, and they have proven to be extremely 
efficient for solving very broad classes of partial differential equations.
Recent insights in the product nature of certain 
MG methods have led to a unified theory of MG and DD methods, 
collectively referred to 
as {\it Schwarz methods}~\cite{BPWX91a,DrWi89,Xu92a}.

In this paper, we consider additive and multiplicative Schwarz methods and 
their acceleration with Krylov methods, for the numerical solution of 
self-adjoint positive definite (SPD) operator equations arising from the 
discretization of elliptic partial differential equations.
The standard theory of conjugate gradient acceleration of linear methods
requires that a certain operator associated with the linear method -- 
the preconditioner -- be symmetric and positive definite.
Often, however, as in the case of Schwarz-based preconditioners, 
the preconditioner is known only implicitly, and
symmetry and positive definiteness are not easily verified.
Here, we try to construct natural sets of sufficient conditions that are
easily verified and do not require the explicit formulation of the
preconditioner.
More precisely, we derive conditions for the constituent components of
MG and DD algorithms (smoother, subdomain solver, transfer operators, etc.),
that guarantee symmetry and positive definiteness of the preconditioning
operator which is (explicitly or implicitly) defined by the resulting 
Schwarz method.

We examine the implications of these conditions for 
various formulations of the standard DD and MG algorithms.
The theory we develop helps to explain the often observed behavior of a 
poor or even divergent MG or DD method which becomes an excellent 
preconditioner when accelerated by a conjugate gradient method.
We also investigate the role of symmetry in linear methods and preconditioners.
Both analysis and numerical evidence suggest that linear methods should not be
symmetrized when used alone, and only minimally symmetrized when 
accelerated by conjugate gradients, in order to achieve the best 
possible convergence results.
In fact, the best results are often obtained when a very nonsymmetric linear 
iteration is used in combination with a nonsymmetric system solver such as 
Bi-CGstab, even though the original problem is SPD.
   
The outline of the paper is as follows. We begin in \S2 by reviewing basic 
linear methods for SPD linear operator equations, and examine Krylov 
acceleration strategies.
In \S3 and \S4, we analyze multiplicative and additive Schwarz preconditioners.
We develop a theory that establishes sufficient conditions for the 
multiplicative and additive algorithms to yield SPD preconditioners.
This theory is used to establish sufficient conditions for multiplicative 
and additive DD and MG methods, and allows for analysis of non-variational and 
even non-convergent linear methods as preconditioners.
A simple lemma, given in \S5, illustrates why symmetrizing may be 
a bad idea for linear methods. 
In \S6, results of numerical experiments obtained with finite-element-based DD 
and MG methods applied to some non-trivial test problems are reported.

\section{Krylov acceleration of linear iterative methods}

In this section, we review some background material on self-adjoint 
linear operators, linear methods, and conjugate gradient acceleration.
More thorough reviews can be found in~\cite{Hack94,Krey78}.

\subsection{Background material and notation}
      \label{subsec:linear_operators}

Let $\calg{H}$ be a real finite-dimensional Hilbert space equipped with the 
inner-product $(\cdot,\cdot)$ inducing the norm 
$\|\cdot\|=(\cdot,\cdot)^{1/2}$.
$\calg{H}$ can be thought of as, for example, the Euclidean space 
$\bbbb{R}^n$, or as an appropriate finite element space.

The {\em adjoint} of a linear operator $A\in\bold{L}(\calg{H},\calg{H})$ with 
respect to $(\cdot,\cdot)$ is the unique operator $A^T$ satisfying
$(Au,v) = (u,A^Tv) \,,~\forall u,v \in \calg{H}$.
An operator $A$ is called {\em self-adjoint} or {\em symmetric} if $A=A^T$;
a self-adjoint operator $A$ is called {\em positive definite} or simply
{\em positive}, if $(Au,u) > 0 \,,~\forall u \in \calg{H}$, ~$u\ne 0$.
If $A$ is self-adjoint positive definite (SPD) with respect to 
$(\cdot,\cdot)$, then the bilinear form 
$(Au,v)$ defines another inner-product on $\calg{H}$, which we denote 
as $(\cdot,\cdot)_A$.
It induces the norm $\|\cdot\|_A = (\cdot,\cdot)_A^{1/2}$.

The adjoint of an operator $M \in \bold{L}(\calg{H},\calg{H})$ with respect 
to $(\cdot,\cdot)_A$, the {\em $A$-adjoint}, 
is the unique operator $M^*$ satisfying 
$~(Mu,v)_A = (u,M^*v)_A \,,~\forall u,v \in \calg{H}$.
From this definition it follows that 
\begin{equation}\label{adjointofM}
M^* = A^{-1} M^T A ~.
\end{equation}
$M$ is called $A$-{\em self-adjoint} if $M=M^*$,
and $A$-{\em positive} if $(Mu,u)_A > 0 \,,~\forall u \in \calg{H}$,
~$u\ne 0$.

If $N \in \bold{L}(\calg{H}_1,\calg{H}_2)$, then the adjoint of $N$,
denoted as $N^T \in \bold{L}(\calg{H}_2,\calg{H}_1)$, 
is defined as the unique operator relating the inner-products in 
$\calg{H}_1$ and $\calg{H}_2$ as follows:
\begin{equation}\label{eqn:adjointofN}
(N u, v)_{\calg{H}_2} = (u, N^T v)_{\calg{H}_1} ~,
 \ \ \ \forall u \in \calg{H}_1 ~, 
 \ \ \ \forall v \in \calg{H}_2 ~.
\end{equation}
Since it is usually clear from the arguments which inner-product is involved, 
we shall often drop the subscripts on inner-products (and norms) throughout the 
paper, except when necessary to avoid confusion.

We denote the spectrum of an operator $M$ as $\sigma(M)$.
The spectral theory for self-adjoint linear operators states that 
the eigenvalues of the self-adjoint operator $M$ are real and lie 
in the closed interval $[\lambda_{\text{min}}(M),\lambda_{\text{max}}(M)]$ 
defined by the Rayleigh quotients:
\begin{equation}\label{RQ}
\lambda_{\text{min}}(M) = \min_{u\ne 0} \frac{(Mu,u)}{(u,u)},
\ \ \ \ \ ~\lambda_{\text{max}}(M) = \max_{u\ne 0} \frac{(Mu,u)}{(u,u)}.
\end{equation}
Similarly, if an operator $M$ is $A$-self-adjoint, then its eigenvalues are 
real and lie in the interval defined by the Rayleigh quotients generated by 
the $A$-inner-product.
A well-known property is that if $M$ is self-adjoint, then the spectral
radius of $M$, denoted as $\rho(M)$, satisfies $\rho(M) = \| M \|$.
This property can also be shown to hold in the $A$-norm for 
$A$-self-adjoint operators 
(or, more generally, for $A$-{\it normal} operators~\cite{AHMS92}).
\begin{lemma}
If $A$ is SPD and $M$ is $A$-self-adjoint, then $~\rho(M) =\| M \|_A$.
   \label{lemma:a-s-a}
\end{lemma}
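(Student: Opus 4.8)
The plan is to reduce the statement to the property already quoted in the text, namely that $\rho(B)=\|B\|$ whenever $B$ is self-adjoint with respect to $(\cdot,\cdot)$. Since $A$ is SPD on the finite-dimensional space $\mathcal{H}$, the spectral theorem furnishes a unique SPD square root $A^{1/2}$ with SPD inverse $A^{-1/2}$. I would introduce the similarity transform $\widetilde{M}:=A^{1/2}MA^{-1/2}$ and establish two facts: that $\widetilde{M}$ is self-adjoint with respect to the original inner-product, and that conjugation by $A^{1/2}$ carries the $A$-geometry of $M$ onto the standard geometry of $\widetilde{M}$.

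For the first fact, $A$-self-adjointness of $M$ together with \eqref{adjointofM} gives $M=M^{*}=A^{-1}M^{T}A$, hence $M^{T}=AMA^{-1}$. Therefore $\widetilde{M}^{T}=A^{-1/2}M^{T}A^{1/2}=A^{-1/2}(AMA^{-1})A^{1/2}=A^{1/2}MA^{-1/2}=\widetilde{M}$, so $\widetilde{M}$ is self-adjoint in $(\cdot,\cdot)$. For the second fact, $\|u\|_{A}^{2}=(Au,u)=(A^{1/2}u,A^{1/2}u)=\|A^{1/2}u\|^{2}$, so $u\mapsto A^{1/2}u$ is a linear isometry from $(\mathcal{H},\|\cdot\|_{A})$ onto $(\mathcal{H},\|\cdot\|)$; moreover $A^{1/2}(Mu)=\widetilde{M}(A^{1/2}u)$, so $\|Mu\|_{A}=\|\widetilde{M}(A^{1/2}u)\|$. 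Taking the supremum over $u\neq 0$, equivalently over $v=A^{1/2}u\neq 0$, yields $\|M\|_{A}=\|\widetilde{M}\|$.

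Finally, $M$ and $\widetilde{M}$ are similar, so $\sigma(M)=\sigma(\widetilde{M})$ and thus $\rho(M)=\rho(\widetilde{M})$. Applying the quoted property to the self-adjoint operator $\widetilde{M}$ gives $\rho(\widetilde{M})=\|\widetilde{M}\|$, and combining the three equalities yields $\rho(M)=\rho(\widetilde{M})=\|\widetilde{M}\|=\|M\|_{A}$, as desired.

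The only delicate point is the legitimacy of the square-root machinery, i.e. invoking the spectral theorem for the SPD operator $A$ to produce $A^{1/2}$ and $A^{-1/2}$; in a finite-dimensional Hilbert space this is routine, so there is no real obstacle. If one wishes to avoid square roots altogether, an alternative is to argue directly: since $M^{*}=M$, one has $\|M\|_{A}^{2}=\sup_{u\neq 0}(M^{*}Mu,u)_{A}/(u,u)_{A}=\sup_{u\neq 0}(M^{2}u,u)_{A}/(u,u)_{A}=\lambda_{\text{max}}(M^{2})$, and then the spectral theorem for the $A$-self-adjoint operator $M$ (an $A$-orthonormal eigenbasis, with real eigenvalues as noted in the text) shows $\lambda_{\text{max}}(M^{2})=\max_{i}\lambda_{i}(M)^{2}=\rho(M)^{2}$. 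I would present the similarity-transform route as the main argument, since it most cleanly reuses the standard self-adjoint result.
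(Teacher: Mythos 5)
Your proof is correct. Note that the paper does not actually prove Lemma~\ref{lemma:a-s-a}: it is stated as a known fact, with the preceding sentence deferring to~\cite{AHMS92}, so there is no in-paper argument to compare against. Your similarity-transform route via $\widetilde{M}=A^{1/2}MA^{-1/2}$ is the standard way to reduce the $A$-self-adjoint case to the quoted Euclidean fact $\rho(\cdot)=\|\cdot\|$ for self-adjoint operators; each step (self-adjointness of $\widetilde{M}$, the isometry $u\mapsto A^{1/2}u$ giving $\|M\|_A=\|\widetilde{M}\|$, and invariance of the spectrum under similarity) checks out, and the alternative direct argument via $\|M\|_A^2=\lambda_{\text{max}}(M^2)$ is also sound.
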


\subsection{Linear methods}

Given the equation
$Au=f,$
where $A \in \bold{L}(\calg{H},\calg{H})$ is SPD,
consider the {\it preconditioned} equation 
$BAu=Bf$, with $B\in \bold{L}(\calg{H},\calg{H})$.
The operator $B$, the {\em preconditioner}, is usually chosen so 
that a Krylov or Richardson method applied to the preconditioned system
has some desired convergence properties.
A simple linear iterative method employing the operator $B$ takes the form
\begin{equation}\label{linear_method}
u^{n+1} = u^n - BAu^n + Bf = (I - BA)u^n + Bf,
\end{equation}
where the convergence behavior of (\ref{linear_method}) is 
determined by the properties of the so-called {\em error propagation operator},
\begin{equation}
E=I-BA.
\end{equation}
The spectral radius of the error propagator $E$ is called the 
{\it convergence factor} for the linear method, whereas the norm 
is referred to as the {\it contraction number}.
We recall two well-known lemmas; see for example~\cite{Kres89} 
or ~\cite{Orte72}.

\begin{lemma}
For arbitrary $f$ and $u^0$, the condition $\rho(E) < 1$ is necessary and 
sufficient for convergence of the linear method~(\ref{linear_method}).
   \label{lemma:rho}
\end{lemma}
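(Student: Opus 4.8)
The plan is to reduce the statement to the purely operator-theoretic fact that $E^n\to 0$ if and only if $\rho(E)<1$, and then read off the conclusion. First I would note that, since $A$ is SPD, $Au=f$ has the unique solution $u^\star=A^{-1}f$, and that $u^\star$ is a fixed point of the iteration \pref{linear_method}: indeed $(I-BA)u^\star+Bf = u^\star - B(Au^\star) + Bf = u^\star - Bf + Bf = u^\star$. Subtracting this identity from \pref{linear_method} shows that the error $e^n:=u^n-u^\star$ satisfies $e^{n+1}=Ee^n$, hence $e^n=E^ne^0$. Because $\calg{H}$ is finite-dimensional and all norms on it are equivalent, ``$u^n\to u^\star$ for arbitrary $f$ and $u^0$'' is equivalent to ``$E^nx\to 0$ for every $x\in\calg{H}$,'' which is in turn equivalent to $\|E^n\|\to 0$.

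For sufficiency, assume $\rho(E)<1$. I would invoke Gelfand's spectral-radius formula $\rho(E)=\lim_{n\to\infty}\|E^n\|^{1/n}$: picking $\epsilon>0$ with $\rho(E)+\epsilon<1$, there is an $N$ so that $\|E^n\|\leq(\rho(E)+\epsilon)^n$ for all $n\geq N$, and the right-hand side tends to $0$. (Equivalently, one may pass to the Jordan canonical form of $E$ and observe that each entry of $E^n$ is a finite sum of terms of the form $p(n)\lambda^n$ with $\lambda\in\sigma(E)$ and $p$ a polynomial, each of which vanishes as $n\to\infty$ when $|\lambda|<1$.)

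For necessity I would argue contrapositively. Suppose $\rho(E)\geq 1$ and choose $\lambda\in\sigma(E)$ with $|\lambda|\geq 1$. If $\lambda$ is real, take a real eigenvector $v\neq 0$; then starting from $u^0=u^\star+v$ gives $e^n=\lambda^n v$ with $\|e^n\|=|\lambda|^n\|v\|\geq\|v\|\not\to 0$, so the method does not converge. If $\lambda=\alpha+i\beta$ is genuinely complex, I would instead work with the two-dimensional $E$-invariant subspace $V\subset\calg{H}$ spanned by the real and imaginary parts of a complexified eigenvector for $\lambda$; on $V$ the operator $E$ acts as a real $2\times 2$ matrix with eigenvalues $\lambda,\bar\lambda$, whose iterates have norm growing like $|\lambda|^n\geq 1$, so some unit vector $e^0\in V$ has $\|E^ne^0\|\not\to 0$. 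In either case $E^n\not\to 0$, so by the reduction above the linear method fails to converge for some admissible $u^0$.

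I expect the only delicate point to be the complex-eigenvalue case: in a \emph{real} Hilbert space one cannot use a complex eigenvector directly as an initial error, so the non-convergence must be exhibited through the associated real invariant subspace (or, alternatively, via the real Jordan form). Everything else — the fixed-point identity, the telescoping $e^n=E^ne^0$, and the equivalence between pointwise and operator-norm convergence — is routine in finite dimensions.
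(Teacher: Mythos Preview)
Your argument is correct. The reduction to the error recursion $e^n=E^ne^0$ and the equivalence with $\|E^n\|\to 0$ is exactly right in finite dimensions; Gelfand's formula handles sufficiency cleanly, and your treatment of the complex-eigenvalue case via the associated real two-dimensional invariant subspace is the appropriate way to exhibit non-convergence in a real Hilbert space. (If you want to make the last step airtight, note that on that subspace $\det(E^n|_V)=|\lambda|^{2n}\geq 1$, so $E^n|_V\not\to 0$, whence some initial error in $V$ fails to decay.)

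As for comparison with the paper: the paper does not prove this lemma at all. It is stated as one of two ``well-known lemmas'' with a pointer to standard references (Kress and Ortega), so there is no in-paper argument to compare against. Your write-up supplies precisely the textbook proof those references contain.
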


\begin{lemma}
The condition $\|E\|<1$, or the condition $\|E\|_A < 1$,
is sufficient for convergence of the linear method~(\ref{linear_method}).
   \label{lemma:norm}
\end{lemma}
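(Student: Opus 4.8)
The plan is to reduce both claimed sufficient conditions to Lemma~\ref{lemma:rho} by comparing the spectral radius $\rho(E)$ to the two norms $\|E\|$ and $\|E\|_A$. The first step is the standard inequality $\rho(E) \leq \|E\|$, valid for any operator on a finite-dimensional space with respect to any induced operator norm: if $\lambda$ is an eigenvalue of $E$ with eigenvector $v \neq 0$, then $|\lambda|\,\|v\| = \|Ev\| \leq \|E\|\,\|v\|$, so $|\lambda| \leq \|E\|$; taking the maximum over $\lambda \in \sigma(E)$ gives $\rho(E) \leq \|E\|$. Hence $\|E\| < 1$ forces $\rho(E) < 1$, and convergence follows from Lemma~\ref{lemma:rho}.

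For the second condition, the same argument is carried out in the $A$-inner-product. Since $A$ is SPD, $(\cdot,\cdot)_A$ is a genuine inner-product on $\calg{H}$ inducing the norm $\|\cdot\|_A$, and the induced operator norm is exactly $\|E\|_A$ as used in the statement. Repeating the eigenvalue estimate with $\|\cdot\|_A$ in place of $\|\cdot\|$ — noting that eigenvalues of $E$ do not depend on the choice of norm — gives $\rho(E) \leq \|E\|_A$. Therefore $\|E\|_A < 1$ again yields $\rho(E) < 1$, and Lemma~\ref{lemma:rho} applies.

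There is essentially no obstacle here; the only point requiring a word of care is that $\rho(E) \leq \|E\|_A$ does not need $E$ to be $A$-self-adjoint (that stronger hypothesis, used in Lemma~\ref{lemma:a-s-a}, would give equality rather than just an inequality). The bound $\rho \leq \|\cdot\|$ holds for an arbitrary operator and any induced norm, so both halves of the lemma follow from the single elementary spectral-radius inequality combined with Lemma~\ref{lemma:rho}.
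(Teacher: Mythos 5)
Your argument is correct: the inequality $\rho(E)\le\|E\|$ holds for the operator norm induced by any vector norm (including $\|\cdot\|_A$, since $A$ SPD makes $(\cdot,\cdot)_A$ a genuine inner product), and combining this with Lemma~\ref{lemma:rho} gives both halves of the claim. The paper does not supply its own proof (it cites standard references), but your route is the standard one; note that one can also bypass Lemma~\ref{lemma:rho} entirely via the direct estimate $\|e^n\|\le\|E\|^n\|e^0\|\to 0$ together with the equivalence of norms on a finite-dimensional space.
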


We now state a series of simple lemmas that we shall use repeatedly in 
the following sections.  Their short proofs are added for the reader's
convenience.

\begin{lemma}
   \label{lemma:ONE}
If $A$ is SPD, then $BA$ is $A$-self-adjoint if and only if $B$ is 
self-adjoint.
\end{lemma}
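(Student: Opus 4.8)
The plan is to prove both directions of the equivalence directly from the characterization of the $A$-adjoint given in~\eqref{adjointofM}, namely $M^* = A^{-1}M^TA$ for any $M\in\bold{L}(\calg{H},\calg{H})$. Apply this with $M = BA$. Then $(BA)^* = A^{-1}(BA)^TA = A^{-1}(A^TB^T)A$, and since $A$ is SPD we have $A^T = A$, so $(BA)^* = A^{-1}A B^T A = B^T A$. Thus $BA$ is $A$-self-adjoint, i.e. $(BA)^* = BA$, if and only if $B^TA = BA$. Since $A$ is invertible, right-multiplication by $A^{-1}$ shows this holds if and only if $B^T = B$, that is, if and only if $B$ is self-adjoint. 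This gives both implications at once.

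The key steps, in order, are: (i) invoke~\eqref{adjointofM} with $M = BA$; (ii) use $A = A^T$ (from $A$ SPD) and the reversal rule $(BA)^T = A^TB^T$ to simplify $(BA)^* = B^TA$; (iii) observe that $BA = (BA)^*$ is then equivalent to $BA = B^TA$; (iv) cancel the invertible factor $A$ on the right to conclude $B = B^T$. Each step is an elementary manipulation, so there is no real obstacle here; the only point requiring a moment's care is step~(ii), where one must be sure to apply the adjoint-reversal identity correctly and use the symmetry of $A$ rather than of $BA$.

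Alternatively, one can argue via inner products without passing through the formula~\eqref{adjointofM}: for all $u,v\in\calg{H}$, $(BAu,v)_A = (A\,BAu,v) = (BAu, Av)$ and $(u, BAv)_A = (Au, BAv) = (B^TAu, Av)$ using the definition of $(\cdot,\cdot)_A$ and of the ordinary adjoint. Hence $BA$ is $A$-self-adjoint iff $(BAu - B^TAu, Av) = 0$ for all $u,v$, and since $A$ is a bijection this is equivalent to $BAu = B^TAu$ for all $u$, i.e. $BA = B^TA$, i.e. $B = B^T$ after cancelling the invertible $A$. I would likely present the first (operator-identity) version since it is shortest and reuses the already-established~\eqref{adjointofM}.
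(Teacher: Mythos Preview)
Your proposal is correct and essentially matches the paper's proof. The paper argues via inner products exactly as in your alternative version, computing $(ABAu,v)=(BAu,Av)=(Au,B^TAv)$ and concluding that $BA=B^TA$ iff $B=B^T$; your preferred operator-identity version via~\eqref{adjointofM} is just a trivially repackaged form of the same computation.
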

\begin{proof}
Note that: $(ABAu,v)=(BAu,Av)=(Au,B^TAv)$.
The lemma follows since $BA=B^TA$ if and only if $B=B^T$.
\end{proof}

\begin{lemma}
   \label{lemma:TWO}
If $A$ is SPD, then $E$ is $A$-self-adjoint if and only if $B$ is self-adjoint.
\end{lemma}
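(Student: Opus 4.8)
The plan is to reduce the statement to Lemma~\ref{lemma:ONE} by exploiting the linearity of the $A$-adjoint operation. First I would record two elementary facts. The identity operator is $A$-self-adjoint: indeed $(Iu,v)_A = (u,v)_A = (u,Iv)_A$ for all $u,v\in\calg{H}$, or equivalently, from \eqref{adjointofM}, $I^* = A^{-1}I^T A = I$. And the map $M \mapsto M^*$ on $\bold{L}(\calg{H},\calg{H})$ is additive, which is immediate either from the defining relation $(Mu,v)_A = (u,M^*v)_A$ or from the formula $M^* = A^{-1}M^T A$ together with additivity of transposition.

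With these in hand, the computation is one line:
\[
E^* = (I - BA)^* = I^* - (BA)^* = I - (BA)^*,
\]
so $E = E^*$ if and only if $BA = (BA)^*$, i.e. if and only if $BA$ is $A$-self-adjoint. Now I would invoke Lemma~\ref{lemma:ONE}: since $A$ is SPD, $BA$ is $A$-self-adjoint precisely when $B$ is self-adjoint. Chaining the two equivalences yields the claim.

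There is no real obstacle here; the only point requiring a moment's care is to make explicit that $I^* = I$ and that the $A$-adjoint is additive, so that $E^*$ may be evaluated term by term. If one wished to bypass Lemma~\ref{lemma:ONE}, a direct alternative is available: using \eqref{adjointofM} and $A^T = A$, one computes $E^* = A^{-1}(I - BA)^T A = A^{-1}(I - AB^T)A = I - B^T A$, so $E = E^*$ iff $BA = B^T A$ iff $B = B^T$, the last step using invertibility of $A$. I would present the first, shorter argument as the main proof and leave the second as a remark.
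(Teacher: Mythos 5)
Your proof is correct and follows essentially the same route as the paper's: both establish $E^* = I - (BA)^*$ (the paper by a direct inner-product computation, you by citing $I^*=I$ and additivity of the $A$-adjoint) and then reduce to Lemma~\ref{lemma:ONE}. The alternative direct computation you sketch via \eqref{adjointofM} is also valid.
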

\begin{proof}
Note that: $(AEu,v)$ $=$ $(Au,v)-(ABAu,v)$ $=$ $(Au,v)-(Au,(BA)^*v)$ $=$ 
$(Au,(I-(BA)^*)v)$. Therefore, $E^* = E$ if and only if $BA$ = $(BA)^*$.
By Lemma~\ref{lemma:ONE}, this holds if and only if $B$ is self-adjoint.
\end{proof}

\begin{lemma}
   \label{lemma:THREE}
If $A$ and $B$ are SPD, then $BA$ is $A$-SPD.
\end{lemma}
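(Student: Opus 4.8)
The plan is to verify the two defining properties of an $A$-SPD operator for $BA$: that it is $A$-self-adjoint and that it is $A$-positive. The first property is immediate from Lemma~\ref{lemma:ONE}: since $B$ is SPD it is in particular self-adjoint, hence $BA$ is $A$-self-adjoint. So the only real content is the positivity estimate, and the argument there is a one-line computation using the symmetry of $A$ together with the positive definiteness of $B$.

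Concretely, for any $u \neq 0$ I would write the $A$-inner product of $BA$ with $u$ against $u$ and push the operator $A$ across:
\begin{equation}
(BAu,u)_A = (ABAu,u) = (BAu,Au) = (B(Au),Au).
\end{equation}
Now set $w = Au$. Since $A$ is SPD it is injective, so $u \neq 0$ forces $w \neq 0$; and since $B$ is positive definite, $(Bw,w) > 0$. Hence $(BAu,u)_A > 0$ for all $u \neq 0$, which is exactly $A$-positivity of $BA$. Combined with $A$-self-adjointness, this gives that $BA$ is $A$-SPD.

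There is no genuine obstacle here — the lemma is essentially a bookkeeping statement about how the $A$-inner product interacts with the factor $A$ sitting inside $BA$. The only point requiring a moment's care is the step $w = Au \neq 0$, which relies on invertibility of $A$; this is guaranteed because $A$ is SPD (hence has strictly positive eigenvalues by the Rayleigh quotient characterization~(\ref{RQ}), so is nonsingular). Everything else is the same move already used in the proofs of Lemmas~\ref{lemma:ONE} and~\ref{lemma:TWO}.
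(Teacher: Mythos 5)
Your proposal is correct and follows essentially the same route as the paper: both reduce $A$-self-adjointness to Lemma~\ref{lemma:ONE} and then verify positivity via $(BAu,u)_A=(BAu,Au)$. The only cosmetic difference is that the paper writes $(BAu,Au)=(B^{1/2}Au,B^{1/2}Au)>0$ using the SPD square root of $B$, whereas you apply the positive definiteness of $B$ directly to $w=Au\ne 0$; both hinge on the same observation that $A$ is injective.
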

\begin{proof}
By Lemma~\ref{lemma:ONE}, $BA$ is $A$-self-adjoint.
Also, we have
$
(ABAu,u)=(BAu,Au)=(B^{1/2}Au,B^{1/2}Au)>0~,~\forall u \ne 0.
$
Hence, $BA$ is $A$-positive, and the result follows.
\end{proof}

\begin{lemma}
If $A$ is SPD and $B$ is self-adjoint, then $\| E \|_A = \rho(E)$.
   \label{lemma:Anorm-eq-rho}
\end{lemma}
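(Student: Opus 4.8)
The plan is to combine Lemma~\ref{lemma:TWO} with Lemma~\ref{lemma:a-s-a}. Since $A$ is SPD and $B$ is self-adjoint by hypothesis, Lemma~\ref{lemma:TWO} immediately gives that $E = I - BA$ is $A$-self-adjoint. Then Lemma~\ref{lemma:a-s-a}, applied with $M = E$, yields $\rho(E) = \|E\|_A$, which is exactly the claim.

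So the entire content of the proof is the observation that the hypotheses of the present lemma are precisely those needed to invoke the two earlier results in sequence: self-adjointness of $B$ feeds Lemma~\ref{lemma:TWO} to get $A$-self-adjointness of $E$, and $A$-self-adjointness of $E$ feeds Lemma~\ref{lemma:a-s-a} to get the norm-equals-spectral-radius identity in the $A$-inner-product. I would write this as a two-line proof with no computation at all.

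There is no real obstacle here; the only thing to be careful about is orientation. The spectral-radius-equals-norm fact for self-adjoint operators is stated in the excerpt in the ordinary inner product (that's the "$\rho(M) = \|M\|$" remark), and the $A$-inner-product version is precisely Lemma~\ref{lemma:a-s-a}. So the proof must cite Lemma~\ref{lemma:a-s-a} rather than the unlabeled remark, since we need the conclusion in the $A$-norm. If one wanted a self-contained argument, one could instead note that $E$ being $A$-self-adjoint means $E$ is self-adjoint with respect to the $(\cdot,\cdot)_A$ inner product, and then the standard spectral theorem (as recalled just before Lemma~\ref{lemma:a-s-a}) applied in that inner product gives real eigenvalues lying in $[\lambda_{\min}(E),\lambda_{\max}(E)]$ with $\|E\|_A = \max_{u \ne 0} |(Eu,u)_A|/(u,u)_A = \rho(E)$. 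But invoking Lemma~\ref{lemma:a-s-a} directly is cleaner.

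\begin{proof}
Since $A$ is SPD and $B$ is self-adjoint, Lemma~\ref{lemma:TWO} implies that $E$ is $A$-self-adjoint. Applying Lemma~\ref{lemma:a-s-a} with $M = E$ then gives $\|E\|_A = \rho(E)$.
\end{proof}
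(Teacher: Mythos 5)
Your proof is correct and is exactly the paper's own argument: the paper likewise invokes Lemma~\ref{lemma:TWO} to conclude that $E$ is $A$-self-adjoint and then applies Lemma~\ref{lemma:a-s-a} to obtain $\|E\|_A = \rho(E)$. Nothing is missing.
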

\begin{proof}
By Lemma~\ref{lemma:TWO}, $E$ is $A$-self-adjoint. 
By Lemma~\ref{lemma:a-s-a} the result follows.
\end{proof}

\begin{lemma}
If $E^*$ is the $A$-adjoint of $E$, then $\| E \|_A^2 = \| E E^* \|_A$.
   \label{lemma:Anorm_Esquare_EEstar}
\end{lemma}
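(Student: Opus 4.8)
The plan is to reduce the identity to a statement about spectral radii and then invoke Lemma~\ref{lemma:a-s-a}. First I would note that the $A$-adjoint operation is an involution, $(E^*)^* = E$, which follows from formula~(\ref{adjointofM}) together with $A^T = A$. Consequently $EE^*$ is $A$-self-adjoint, since $(EE^*)^* = (E^*)^*E^* = EE^*$, and likewise $E^*E$ is $A$-self-adjoint. Applying Lemma~\ref{lemma:a-s-a} to $EE^*$ gives $\|EE^*\|_A = \rho(EE^*)$, so it suffices to prove $\|E\|_A^2 = \rho(EE^*)$.

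Next I would express $\|E\|_A^2$ as a Rayleigh quotient: for every $u \ne 0$,
\[
\|Eu\|_A^2 = (Eu,Eu)_A = (E^*Eu,u)_A ,
\]
so that $\|E\|_A^2 = \sup_{u\ne 0} (E^*Eu,u)_A/(u,u)_A$. Since $E^*E$ is $A$-self-adjoint and $A$-positive semidefinite (indeed $(E^*Eu,u)_A = \|Eu\|_A^2 \geq 0$), the spectral characterization~(\ref{RQ}), applied in the Hilbert space $(\calg{H},(\cdot,\cdot)_A)$, identifies this supremum with $\lambda_{\text{max}}(E^*E) = \rho(E^*E)$. Thus $\|E\|_A^2 = \rho(E^*E)$.

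It then remains to check that $\rho(E^*E) = \rho(EE^*)$. This is the one ingredient not supplied by the preceding lemmas: in the finite-dimensional setting the operators $PQ$ and $QP$ have the same nonzero eigenvalues, which I would apply with $P = E^*$ and $Q = E$; and in the degenerate case where neither has a nonzero eigenvalue, $E^*E$ --- being $A$-self-adjoint, hence $A$-diagonalizable --- must vanish, forcing $E = 0$ and $EE^* = 0$ as well, so the two spectral radii agree there too. Chaining the three steps gives $\|E\|_A^2 = \rho(E^*E) = \rho(EE^*) = \|EE^*\|_A$.

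I expect no serious obstacle; the only point requiring care is that Lemma~\ref{lemma:a-s-a} and the Rayleigh-quotient characterization~(\ref{RQ}) were stated for the ambient inner product and must be transported to the $A$-inner product. This is harmless because $(\calg{H},(\cdot,\cdot)_A)$ is itself a finite-dimensional real Hilbert space and $E^*$, $EE^*$, $E^*E$ are all built from the $A$-adjoint, so every cited fact applies there verbatim --- a point the text already anticipates when it remarks that the spectral bounds hold in the $A$-inner-product for $A$-self-adjoint operators. As an alternative that sidesteps the $PQ$/$QP$ spectral fact, one can argue by a norm sandwich: using the isometry $\|E^*\|_A = \|E\|_A$ of the $A$-adjoint and submultiplicativity of the $A$-operator-norm, $\|E\|_A^2 = \|E^*\|_A^2 = \sup_{u\ne 0}(EE^*u,u)_A/(u,u)_A \le \|EE^*\|_A \le \|E\|_A\|E^*\|_A = \|E\|_A^2$, so equality holds throughout.
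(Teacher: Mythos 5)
Your proof is correct and is essentially the argument the paper has in mind: the paper's own ``proof'' is only a one-line citation of the familiar Euclidean-norm identity $\|M\|^2=\|MM^T\|$, and your write-up is exactly that standard argument transported to the $A$-inner product, which is legitimate for the reason you state --- $(\calg{H},(\cdot,\cdot)_A)$ is itself a finite-dimensional real Hilbert space in which $E^*$ is the adjoint, so Lemma~\ref{lemma:a-s-a} and the Rayleigh-quotient characterization apply verbatim. Of your two variants, the closing norm-sandwich $\|E\|_A^2=\|E^*\|_A^2\le\|EE^*\|_A\le\|E\|_A\|E^*\|_A=\|E\|_A^2$ is the cleaner one, since it avoids the $\rho(PQ)=\rho(QP)$ detour and the unnecessary degenerate-case discussion.
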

\begin{proof}
The proof follows that of a familiar result for the 
Euclidean 2-norm~\cite{Hack94}.
\end{proof}

\begin{lemma}
If $A$ and $B$ are SPD, and $E$ is $A$-non-negative, 
then $\| E \|_A < 1$.
   \label{lemma:rho3}
\end{lemma}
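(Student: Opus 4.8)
The plan is to chain together the elementary lemmas already proved with the spectral theory for $A$-self-adjoint operators recalled in \eqref{RQ}. Since $B$ is SPD it is in particular self-adjoint, so Lemma~\ref{lemma:TWO} shows that $E = I - BA$ is $A$-self-adjoint. Lemma~\ref{lemma:Anorm-eq-rho} then gives $\| E \|_A = \rho(E)$, so it suffices to prove the strict bound $\rho(E) < 1$.

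Next I would locate the spectrum of $E$. Being $A$-self-adjoint, $E$ has real eigenvalues which, by the spectral theory of \eqref{RQ} applied in the $A$-inner-product, lie in the interval $[\lambda_{\text{min}}(E),\lambda_{\text{max}}(E)]$ defined by the $A$-Rayleigh quotients of $E$. The hypothesis that $E$ is $A$-non-negative means precisely that $(Eu,u)_A \geq 0$ for all $u \in \calg{H}$, i.e. $\lambda_{\text{min}}(E) \geq 0$. Hence $\sigma(E) \subset [0,\lambda_{\text{max}}(E)]$ and therefore $\rho(E) = \lambda_{\text{max}}(E)$.

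It remains to bound $\lambda_{\text{max}}(E)$ strictly below $1$. Since $A$ and $B$ are both SPD, Lemma~\ref{lemma:THREE} tells us $BA$ is $A$-SPD; because $\calg{H}$ is finite-dimensional, the $A$-Rayleigh quotient $(BAu,u)_A/(u,u)_A$ attains its minimum, so $\mu := \lambda_{\text{min}}(BA) > 0$. From $E = I - BA$ we get $\lambda_{\text{max}}(E) = 1 - \mu$. Combining the three steps, $\| E \|_A = \rho(E) = \lambda_{\text{max}}(E) = 1 - \mu < 1$, which is the claim.

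The one point I would be careful about is the strictness of the inequality: it is essential to use that $BA$ is $A$-\emph{positive definite}, not merely $A$-non-negative, and that $\calg{H}$ is finite-dimensional, so that $\lambda_{\text{min}}(BA)$ is genuinely attained and strictly positive. Without this observation the argument would only deliver $\| E \|_A \leq 1$, and the strict contraction estimate — which is what makes $E$ useful as (the error propagator of) a convergent method — would be lost.
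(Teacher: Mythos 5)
Your proof is correct and follows essentially the same route as the paper's: both use Lemma~\ref{lemma:TWO} to get $A$-self-adjointness of $E$, Lemma~\ref{lemma:THREE} to get that $BA$ is $A$-SPD (hence $\lambda_{\min}(BA)>0$), the $A$-non-negativity of $E$ to place $\sigma(BA)$ in $(0,1]$ equivalently $\sigma(E)$ in $[0,1)$, and Lemma~\ref{lemma:Anorm-eq-rho} to convert $\rho(E)<1$ into $\|E\|_A<1$. Your explicit remark that finite-dimensionality guarantees $\lambda_{\min}(BA)$ is attained and strictly positive is a sound clarification of a step the paper leaves implicit.
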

\begin{proof}
By Lemma~\ref{lemma:TWO}, $E$ is $A$-self-adjoint. As $E$ is 
$A$-non-negative, it holds $(Eu,u)_A \ge 0$, or $(BAu,u)_A \le (u,u)_A$.
By Lemma~\ref{lemma:THREE}, $BA$ is $A$-SPD, and we have that
$
0 < (BAu,u)_A \le (u,u)_A,~\forall  u \ne 0,
$
which, by (\ref{RQ}), implies that 
$0 < \lambda_i \le 1, ~\forall \lambda_i \in \sigma(BA)$.
Thus, 
$
\rho(E) = 1 - \min_i \lambda_i < 1.
$
Finally, by Lemma~\ref{lemma:Anorm-eq-rho}, we have $\|E\|_A = \rho(E)$.
\end{proof}

We will also have use for the following two simple lemmas. 
\begin{lemma}
   \label{lemma:bounds}
If $A$ is SPD and $B$ is self-adjoint, and $E$ is such that:
$$
-C_1 (u,u)_A \le (Eu,u)_A \le C_2 (u,u)_A, \ \ \ ~\forall u \in \calg{H},
$$
for $C_1\ge 0$ and $C_2\ge 0$, then $\rho(E) = \| E \|_A \le \max\{C_1,C_2\}$.
\end{lemma}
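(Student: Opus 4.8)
The plan is to reduce the statement to the spectral theorem for $A$-self-adjoint operators, in the same style as the preceding lemmas of this section. First I would note that since $A$ is SPD and $B$ is self-adjoint, Lemma~\ref{lemma:TWO} applies, so $E = I - BA$ is $A$-self-adjoint. Consequently Lemma~\ref{lemma:Anorm-eq-rho} gives $\|E\|_A = \rho(E)$, and it therefore suffices to bound the $A$-spectral radius of $E$ by $\max\{C_1,C_2\}$.

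Next I would invoke the spectral theory for $A$-self-adjoint operators (the $A$-inner-product analogue of~(\ref{RQ}) mentioned just after that equation): the eigenvalues of $E$ are real and lie in the interval $[\lambda_{\text{min}}(E),\lambda_{\text{max}}(E)]$, with
\[
\lambda_{\text{min}}(E) = \min_{u\ne 0} \frac{(Eu,u)_A}{(u,u)_A}, \qquad
\lambda_{\text{max}}(E) = \max_{u\ne 0} \frac{(Eu,u)_A}{(u,u)_A}.
\]
The hypothesis states exactly that these Rayleigh quotients are confined to $[-C_1, C_2]$, hence $\lambda_{\text{min}}(E) \ge -C_1$ and $\lambda_{\text{max}}(E) \le C_2$. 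Thus every $\lambda \in \sigma(E)$ satisfies $-C_1 \le \lambda \le C_2$, so $|\lambda| \le \max\{C_1,C_2\}$; taking the maximum over $\sigma(E)$ gives $\rho(E) \le \max\{C_1,C_2\}$. Combining this with $\|E\|_A = \rho(E)$ completes the argument.

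There is essentially no obstacle here; the only point that deserves care is that the two-sided Rayleigh-quotient bound directly controls only the extreme eigenvalues $\lambda_{\text{min}}(E)$ and $\lambda_{\text{max}}(E)$, and it is $A$-self-adjointness of $E$ — so that $\sigma(E)$ is real and sandwiched between those extremes — that upgrades this to control of the entire spectrum, and hence of $\rho(E)$. Without the self-adjointness of $B$ one would only bound the numerical range of $E$, not $\rho(E)$ or $\|E\|_A$, which is precisely why that hypothesis is used at the very first step.
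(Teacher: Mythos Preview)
Your proof is correct and follows essentially the same route as the paper: invoke Lemma~\ref{lemma:TWO} to get $A$-self-adjointness of $E$, use the $A$-inner-product version of~(\ref{RQ}) to bound $\lambda_{\min}(E)$ and $\lambda_{\max}(E)$ by $-C_1$ and $C_2$, and then apply Lemma~\ref{lemma:Anorm-eq-rho} to identify $\rho(E)$ with $\|E\|_A$. The paper's proof is simply a terser version of what you wrote.
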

\begin{proof}
By Lemma~\ref{lemma:TWO}, $E$ is $A$-self-adjoint, and by (\ref{RQ})
$\lambda_{min}(E)$ and $\lambda_{max}(E)$
are bounded by $-C_1$ and $C_2$, respectively.
The result then follows by Lemma~\ref{lemma:Anorm-eq-rho}.
\end{proof}
\begin{lemma}
   \label{coro:bounds}
If $A$ and $B$ are SPD, then Lemma~\ref{lemma:bounds} holds for some $C_2<1$.
\end{lemma}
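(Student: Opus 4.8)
The plan is to invoke Lemma~\ref{lemma:bounds} directly, so that all I need to do is exhibit admissible constants $C_1 \ge 0$ and $C_2$ with $0 \le C_2 < 1$. The starting point is the identity $E = I - BA$, which gives, for every $u \ne 0$,
\[
\frac{(Eu,u)_A}{(u,u)_A} = 1 - \frac{(BAu,u)_A}{(u,u)_A}.
\]
Thus the question reduces to controlling the $A$-Rayleigh quotients of $BA$, both from above and from below.

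First I would apply Lemma~\ref{lemma:THREE}: since $A$ and $B$ are SPD, $BA$ is $A$-SPD, hence in particular $A$-self-adjoint. By the spectral characterization~(\ref{RQ}) read off in the $A$-inner-product, the eigenvalues of $BA$ are real and, in this finite-dimensional setting, the extreme Rayleigh quotients $\lambda_{\min}(BA)$ and $\lambda_{\max}(BA)$ are attained. Because $BA$ is $A$-positive, $\lambda_{\min}(BA) > 0$, so that $1 - \lambda_{\min}(BA) < 1$; this strict inequality is exactly where positive-definiteness of $B$ (rather than mere self-adjointness) enters.

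Combining this with the displayed identity, $(Eu,u)_A / (u,u)_A \in [\,1 - \lambda_{\max}(BA),\ 1 - \lambda_{\min}(BA)\,]$ for all $u \ne 0$. I would then set
\[
C_2 = \max\{0,\ 1 - \lambda_{\min}(BA)\} < 1, \qquad C_1 = \max\{0,\ \lambda_{\max}(BA) - 1\} \ge 0,
\]
so that $-C_1 (u,u)_A \le (Eu,u)_A \le C_2 (u,u)_A$ holds for all $u \in \calg{H}$, with $C_1 \ge 0$ and $0 \le C_2 < 1$. The hypotheses of Lemma~\ref{lemma:bounds} ($A$ SPD, $B$ self-adjoint) are satisfied, and it yields $\rho(E) = \|E\|_A \le \max\{C_1, C_2\}$; this establishes the claim.

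There is essentially no obstacle here; the only points requiring a moment's care are the truncation at $0$ in the definitions of $C_1$ and $C_2$ — needed because Lemma~\ref{lemma:bounds} insists on nonnegative constants while the signs of $1 - \lambda_{\min}(BA)$ and $\lambda_{\max}(BA) - 1$ are not known a priori — and the fact that the crucial strict bound $C_2 < 1$ is nothing but a restatement of $\lambda_{\min}(BA) > 0$, i.e. of the $A$-positivity of $BA$ furnished by Lemma~\ref{lemma:THREE}.
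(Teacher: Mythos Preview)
Your proposal is correct and follows essentially the same route as the paper: both arguments invoke Lemma~\ref{lemma:THREE} to conclude that $BA$ is $A$-SPD, deduce that every eigenvalue of $E = I - BA$ lies strictly below $1$, and take $C_2$ accordingly. Your version is slightly more explicit in that you also write down an admissible $C_1$ and truncate both constants at $0$ to respect the sign requirements of Lemma~\ref{lemma:bounds}, whereas the paper leaves this implicit.
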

\begin{proof}
By Lemma~\ref{lemma:THREE}, $BA$ is $A$-SPD, which implies that the 
eigenvalues of $BA$ are real and positive.
Hence, we must have that
$\lambda_i(E) = 1 - \lambda_i(BA) < 1,~\forall i$.
Since $C_2$ in Lemma~\ref{lemma:bounds} bounds the largest positive 
eigenvalue of $E$, we have that $C_2 < 1$.
\end{proof}

\subsection{Krylov acceleration of SPD linear methods}

The conjugate gradient method was developed by Hestenes and 
Stiefel~\cite{HeSt52} as a method for solving linear systems 
$Au=f$ with SPD operators $A$.
In order to improve convergence, it is common to 
{\it precondition} the linear system by an SPD 
{\it preconditioning operator} $B \approx A^{-1}$, in which case the 
generalized or preconditioned conjugate gradient method results (\cite{CGO76}).
Our goal in this section is to briefly review some relationships 
between the contraction number of a basic linear preconditioner 
and that of the resulting preconditioned conjugate gradient algorithm.

We start with the well-known conjugate gradient 
contraction bound (\cite{Hack94}):
\begin{equation}
   \label{eqn:cg_error}
\| e^{i+1} \|_A ~\le~ 2 
   \left( 1 - \frac{ 2 }{ 1 + \sqrt{\kappa_A(BA)} } \right)^{i+1}
   \| e^0 \|_A
   ~=~ 2 ~\delta_{\text{cg}}^{i+1} ~\| e^0 \|_A.
\end{equation}
The ratio of extreme eigenvalues of $BA$ appearing in the derivation of the 
bound gives rise to the generalized condition number $\kappa_A(BA)$ appearing 
above.
This ratio is often mistakenly called the (spectral) condition number 
$\kappa(BA)$; in fact, since $BA$ is not self-adjoint, this ratio is not in 
general equal to the usual condition number 
(this point is discussed in great detail in~\cite{AHMS92}).
However, the ratio does yield a condition number in the $A$-norm.
The following lemma is a special case of Corollary~4.2 in~\cite{AHMS92}.
\begin{lemma}
  \label{lemma:cond0}
If $A$ and $B$ are SPD, then
\begin{equation}\label{A_condition_number}
\kappa_A(BA) = \| BA \|_A \| (BA)^{-1} \|_A
    = \frac{\lambda_{\text{max}}(BA)}{\lambda_{\text{min}}(BA)}~.
\end{equation}
\end{lemma}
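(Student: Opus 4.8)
The plan is to reduce everything to the spectral identity $\rho(M)=\|M\|_A$ for $A$-self-adjoint operators, which is Lemma~\ref{lemma:a-s-a}, combined with the fact that the eigenvalues of $BA$ are real and strictly positive. First I would recall that, by Lemma~\ref{lemma:THREE}, since $A$ and $B$ are SPD the product $BA$ is $A$-SPD; in particular $BA$ is $A$-self-adjoint, so $\sigma(BA)\subset\mathbb{R}$, and $A$-positivity forces every eigenvalue to be strictly positive. Consequently $BA$ is invertible and $0<\lambda_{\text{min}}(BA)\le\lambda_{\text{max}}(BA)$.

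Next I would apply Lemma~\ref{lemma:a-s-a} directly to $M=BA$: because $BA$ is $A$-self-adjoint, $\|BA\|_A=\rho(BA)=\lambda_{\text{max}}(BA)$, the last equality holding because all eigenvalues are positive. For the second factor I would observe that $(BA)^{-1}$ is again $A$-self-adjoint, since taking $A$-adjoints commutes with inversion, $((BA)^{-1})^{*}=((BA)^{*})^{-1}=(BA)^{-1}$, and that its spectrum is $\{1/\lambda:\lambda\in\sigma(BA)\}$, again all positive. A second application of Lemma~\ref{lemma:a-s-a} then gives $\|(BA)^{-1}\|_A=\rho((BA)^{-1})=\max_{\lambda\in\sigma(BA)}1/\lambda=1/\lambda_{\text{min}}(BA)$. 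Multiplying the two norm identities yields $\|BA\|_A\,\|(BA)^{-1}\|_A=\lambda_{\text{max}}(BA)/\lambda_{\text{min}}(BA)$, and since $\kappa_A(BA)$ is by definition the $A$-norm condition number $\|BA\|_A\,\|(BA)^{-1}\|_A$, both claimed equalities in~(\ref{A_condition_number}) follow.

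The only mildly delicate point — the ``main obstacle,'' such as it is — is making precise that $\sigma((BA)^{-1})=\{1/\lambda:\lambda\in\sigma(BA)\}$ and that the extrema are attained, i.e.\ that $\rho((BA)^{-1})$ equals $1/\lambda_{\text{min}}(BA)$ rather than merely being bounded by it; in the finite-dimensional setting this is immediate from the existence of an $A$-orthonormal eigenbasis for the $A$-self-adjoint operator $BA$, so no limiting argument is needed. It is also worth noting in passing that the ratio $\lambda_{\text{max}}(BA)/\lambda_{\text{min}}(BA)$ is genuinely the condition number in the $A$-norm and not the ordinary operator-norm condition number of the non-self-adjoint operator $BA$; the proof above shows the ratio coincides with $\kappa_A(BA)$ precisely because both $A$-norms collapse to spectral radii via Lemma~\ref{lemma:a-s-a}, which is exactly the distinction emphasized in~\cite{AHMS92}.
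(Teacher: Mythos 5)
Your proof is correct. Note, however, that the paper itself does not prove Lemma~\ref{lemma:cond0} at all: it simply declares the result to be a special case of Corollary~4.2 in~\cite{AHMS92}. Your argument therefore supplies the self-contained proof that the paper omits, and it does so using only the paper's own toolkit: Lemma~\ref{lemma:THREE} to get that $BA$ is $A$-SPD (hence $A$-self-adjoint with strictly positive spectrum), Lemma~\ref{lemma:a-s-a} applied twice to convert $\|BA\|_A$ and $\|(BA)^{-1}\|_A$ into spectral radii, and the elementary facts that inversion commutes with taking $A$-adjoints and inverts the spectrum. The one point worth flagging is definitional rather than mathematical: the paper introduces $\kappa_A(BA)$ as ``the ratio of extreme eigenvalues'' arising in the CG bound, whereas you take $\kappa_A(BA):=\|BA\|_A\,\|(BA)^{-1}\|_A$ as the definition; since your argument establishes that these two quantities coincide, the lemma follows either way, but you should be aware that the substantive content of~(\ref{A_condition_number}) is precisely this coincidence. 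Your closing remark distinguishing the $A$-norm condition number from the ordinary operator-norm condition number of the non-self-adjoint operator $BA$ is exactly the caveat the paper itself emphasizes in the surrounding discussion.
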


\begin{remark} \label{remark:alpha}
Often a linear method requires a parameter $\alpha$ in order to 
be convergent, leading to an error propagator of the form
$E=I - \alpha B A$.
Equation (\ref{A_condition_number}) shows that the $A$-condition number does 
not depend on the particular choice of $\alpha$.
Hence, one can use the conjugate gradient method as an accelerator for the 
method without a parameter, avoiding the possibly costly estimation of 
a good $\alpha$.
\end{remark}

The following result gives a bound on the condition number of the operator 
$BA$ in terms of the extreme eigenvalues of the error propagator $E=I-BA$;
such bounds are often used in the analysis of linear preconditioners 
(cf. Proposition~5.1 in~\cite{Xu89}).
We give a short proof of this result for completeness.
\begin{lemma}
  \label{lemma:cond}
If $A$ and $B$ are SPD, and $E$ is such that:
\begin{equation}\label{BA_condition}
-C_1 (u,u)_A \le (Eu,u)_A \le C_2 (u,u)_A, \ \ \ ~\forall u \in \calg{H},
\end{equation}
for $C_1\ge 0$ and $C_2 \ge 0$, then the above must hold with $C_2 < 1$, and 
it follows that: 
$$
\kappa_A(BA) \le \frac{1+C_1}{1-C_2}.
$$
\end{lemma}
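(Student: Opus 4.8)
The plan is to transfer the two-sided $A$-form bound on $E$ into a two-sided $A$-form bound on $BA$, read off bounds on the extreme eigenvalues of $BA$ from the Rayleigh quotients~(\ref{RQ}), and then invoke Lemma~\ref{lemma:cond0}. The assertion that $C_2$ may be taken strictly less than $1$ is exactly the content of Lemma~\ref{coro:bounds} (valid since $A$ and $B$ are SPD), so I would simply cite it; equivalently, $BA$ is $A$-SPD by Lemma~\ref{lemma:THREE}, so $\lambda_{\text{min}}(BA) > 0$ and hence $\lambda_{\text{max}}(E) = 1 - \lambda_{\text{min}}(BA) < 1$, which means $\max\{0,\lambda_{\text{max}}(E)\}$ is an admissible choice of $C_2$ lying below $1$. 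I will therefore assume from here on that $C_2 < 1$.

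For the main estimate, I would use $E = I - BA$ to write $(BAu,u)_A = (u,u)_A - (Eu,u)_A$ for all $u$, and feed in the hypothesized inequalities to obtain
$$
(1-C_2)(u,u)_A \le (BAu,u)_A \le (1+C_1)(u,u)_A, \qquad \forall u \in \calg{H}.
$$
Since $B$ is self-adjoint, $E$ — and therefore $BA$ — is $A$-self-adjoint (Lemma~\ref{lemma:TWO}), so the Rayleigh-quotient characterization~(\ref{RQ}) applied in the $A$-inner product gives $\lambda_{\text{min}}(BA) \ge 1-C_2$ and $\lambda_{\text{max}}(BA) \le 1+C_1$. Because $C_2 < 1$, the lower bound $1-C_2$ is positive, so in particular $\lambda_{\text{min}}(BA) > 0$.

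Finally, Lemma~\ref{lemma:cond0} identifies $\kappa_A(BA)$ with $\lambda_{\text{max}}(BA)/\lambda_{\text{min}}(BA)$, and combining the two eigenvalue estimates — dividing is legitimate and direction-preserving precisely because $\lambda_{\text{min}}(BA) > 0$ — yields $\kappa_A(BA) \le (1+C_1)/(1-C_2)$, which is the claim. I do not anticipate any real obstacle; the only point that needs care is securing $\lambda_{\text{min}}(BA) > 0$ before forming the quotient, and that is exactly why one first extracts the strict bound $C_2 < 1$ via Lemma~\ref{coro:bounds}.
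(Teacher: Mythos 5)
Your proposal is correct and follows essentially the same route as the paper: obtain $C_2<1$ from Lemma~\ref{coro:bounds}, convert the bound on $(Eu,u)_A$ into the two-sided bound $(1-C_2)(u,u)_A \le (BAu,u)_A \le (1+C_1)(u,u)_A$, use the $A$-self-adjointness (indeed $A$-positive-definiteness, Lemma~\ref{lemma:THREE}) of $BA$ to read off the eigenvalue interval from the Rayleigh quotients, and conclude via Lemma~\ref{lemma:cond0}. The only cosmetic difference is that you justify $A$-self-adjointness through Lemma~\ref{lemma:TWO} and separately note $\lambda_{\text{min}}(BA)>0$, whereas the paper gets both at once from Lemma~\ref{lemma:THREE}.
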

\begin{proof}
First, since $A$ and $B$ are SPD, by Lemma~\ref{coro:bounds} we have that 
$C_2 < 1$.
Since $(Eu,u)_A=(u,u)_A-(BAu,u)_A$, it is clear that
$$
(1-C_2) (u,u)_A \le (BAu,u)_A \le (1+C_1) (u,u)_A, 
     \ \ \ ~\forall u \in \calg{H}.
$$
By Lemma~\ref{lemma:THREE}, $BA$ is $A$-SPD. Its eigenvalues are real and 
positive, and lie in the interval defined by the Rayleigh quotients generated 
by the $A$-inner-product.
Hence, that interval is given by $[(1-C_2),(1+C_1)]$, and 
by Lemma~\ref{lemma:cond0} the result follows.
\end{proof}

\begin{remark}
Even if a linear method is not convergent, 
it may still be a good preconditioner.
If it is the case that $C_2 <<1$, and if $C_1>1$ does not become too large, 
then $\kappa_A(BA)$ will be small and the conjugate gradient method will 
converge rapidly, even though the linear method diverges.
\end{remark}

If only a bound on the norm of the error propagator $E=I-BA$ is available,
then the following result can be used to bound the condition number of $BA$.
This result is used for example in~\cite{Xu92a}.
\begin{corollary}
  \label{coro:cond2}
If $A$ and $B$ are SPD, and $\| I - BA \|_A \le \delta < 1$, then
\begin{equation}
 \label{eqn:cond5}
\kappa_A(BA) \le \frac{1 + \delta}{1 - \delta}.
\end{equation}
\end{corollary}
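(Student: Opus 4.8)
The plan is to derive Corollary~\ref{coro:cond2} as a direct specialization of Lemma~\ref{lemma:cond}. The key observation is that the hypothesis $\|I - BA\|_A \le \delta < 1$ is a statement about the $A$-norm of the $A$-self-adjoint operator $E = I - BA$ (self-adjointness in the $A$-inner-product follows from Lemma~\ref{lemma:TWO}, since $B$ is SPD hence self-adjoint), and by Lemma~\ref{lemma:a-s-a} this $A$-norm equals the spectral radius $\rho(E)$. Consequently the eigenvalues of $E$ all lie in $[-\delta, \delta]$, which via the Rayleigh-quotient characterization~(\ref{RQ}) in the $A$-inner-product gives
$$
-\delta\,(u,u)_A \le (Eu,u)_A \le \delta\,(u,u)_A, \qquad \forall u \in \calg{H}.
$$

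First I would invoke this chain to put the hypothesis into exactly the two-sided form~(\ref{BA_condition}) required by Lemma~\ref{lemma:cond}, with the identifications $C_1 = \delta$ and $C_2 = \delta$. Since $A$ and $B$ are SPD, Lemma~\ref{lemma:cond} (which in turn leans on Lemma~\ref{coro:bounds} to guarantee $C_2 < 1$, though here $C_2 = \delta < 1$ is assumed outright) then yields
$$
\kappa_A(BA) \le \frac{1 + C_1}{1 - C_2} = \frac{1 + \delta}{1 - \delta},
$$
which is precisely~(\ref{eqn:cond5}). That completes the argument.

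There is no real obstacle here: the result is a corollary in the genuine sense, and the only thing to be careful about is making explicit why a bound on $\|E\|_A$ upgrades to a two-sided bound on the quadratic form $(Eu,u)_A$ — this is exactly where $A$-self-adjointness of $E$ is used, so that $\|E\|_A = \rho(E)$ and the spectrum is real and confined to $[-\delta,\delta]$. Without self-adjointness one could only conclude $|(Eu,u)_A| \le \|E\|_A (u,u)_A$ by Cauchy–Schwarz anyway, which already suffices for this particular inequality; but routing through Lemma~\ref{lemma:a-s-a} keeps the exposition parallel to Lemma~\ref{lemma:cond} and emphasizes that the interval $[-\delta,\delta]$ is sharp. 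One could alternatively give an even shorter proof by noting directly that $BA = I - E$ has $A$-spectrum in $[1-\delta, 1+\delta]$ and applying Lemma~\ref{lemma:cond0}, but presenting it as an instance of Lemma~\ref{lemma:cond} is cleaner.
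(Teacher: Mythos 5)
Your proof is correct and follows essentially the same route as the paper, which simply applies Lemma~\ref{lemma:cond} with $C_1=C_2=\delta$; you merely spell out the intermediate step (via Lemmas~\ref{lemma:TWO} and~\ref{lemma:a-s-a}) showing that the $A$-norm bound on $E$ yields the two-sided quadratic-form bound~(\ref{BA_condition}), which the paper leaves implicit.
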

\begin{proof}
This follows immediately from Lemma~\ref{lemma:cond} with 
$\delta = \max\{C_1,C_2\}$.
\end{proof}

The next result connects the contraction number of the 
preconditioner to the contraction number of the 
preconditioned conjugate gradient method. 
It shows that the conjugate gradient method always accelerates a linear method
(if the conditions of the lemma hold).

\begin{lemma}
   \label{theo:accel}
If $A$ and $B$ are SPD, and $\|I-BA\|_A \le \delta < 1$, 
then $\delta_{\text{cg}} < \delta$.
\end{lemma}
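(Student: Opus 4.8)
The plan is to compare the conjugate gradient contraction factor $\delta_{\text{cg}}$, which is governed by $\kappa_A(BA)$ through \eqref{eqn:cg_error}, with the linear-method contraction factor $\delta$, which bounds $\|I-BA\|_A$. The natural bridge is Corollary~\ref{coro:cond2}: since $A$ and $B$ are SPD and $\|I-BA\|_A \le \delta < 1$, we have $\kappa_A(BA) \le (1+\delta)/(1-\delta)$. So it suffices to show that feeding this bound on the condition number into the formula for $\delta_{\text{cg}}$ yields a strictly smaller number than $\delta$ itself.

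First I would write $\kappa = \kappa_A(BA)$ and recall from \eqref{eqn:cg_error} that $\delta_{\text{cg}} = 1 - 2/(1+\sqrt{\kappa})$, equivalently $\delta_{\text{cg}} = (\sqrt{\kappa}-1)/(\sqrt{\kappa}+1)$. Since this is an increasing function of $\kappa$, I can substitute the upper bound $\kappa \le (1+\delta)/(1-\delta)$ to get
\[
\delta_{\text{cg}} \le \frac{\sqrt{(1+\delta)/(1-\delta)}-1}{\sqrt{(1+\delta)/(1-\delta)}+1}
 = \frac{\sqrt{1+\delta}-\sqrt{1-\delta}}{\sqrt{1+\delta}+\sqrt{1-\delta}}.
\]
Then the remaining task is purely the elementary inequality: for $0<\delta<1$,
\[
\frac{\sqrt{1+\delta}-\sqrt{1-\delta}}{\sqrt{1+\delta}+\sqrt{1-\delta}} < \delta.
\]
I would prove this by rationalizing the left side — multiplying numerator and denominator by $\sqrt{1+\delta}-\sqrt{1-\delta}$ gives $\big((1+\delta)-(1-\delta)\big)/\big((1+\delta)+(1-\delta) + \text{cross terms}\big)$; more cleanly, the left side equals $(1-\sqrt{1-\delta^2})/\delta$ after simplification. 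So the claim reduces to $1-\sqrt{1-\delta^2} < \delta^2$, i.e. $1-\delta^2 < \sqrt{1-\delta^2}$, which holds because $0 < 1-\delta^2 < 1$ forces $1-\delta^2 < \sqrt{1-\delta^2}$.

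I do not anticipate a serious obstacle here; the only mild care needed is the algebraic simplification of the rationalized expression and making sure the monotonicity substitution goes in the correct direction (larger $\kappa$ gives larger $\delta_{\text{cg}}$, so an upper bound on $\kappa$ does give an upper bound on $\delta_{\text{cg}}$). One should also note that the constant $2$ prefactor in \eqref{eqn:cg_error} is irrelevant since the statement concerns only the geometric rate $\delta_{\text{cg}}$, not the full error bound. The strict inequality at the end is genuinely strict for every $\delta \in (0,1)$, which is exactly what is claimed.
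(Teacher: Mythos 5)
Your proof is correct. Note that the paper itself supplies no argument for this lemma --- it simply defers to the references \cite{Xu92a} and \cite{Hols94c} --- and the computation you give is precisely the standard one found there: combine Corollary~\ref{coro:cond2} with the monotonicity of $\kappa \mapsto (\sqrt{\kappa}-1)/(\sqrt{\kappa}+1)$ to get
$\delta_{\text{cg}} \le \bigl(\sqrt{1+\delta}-\sqrt{1-\delta}\bigr)/\bigl(\sqrt{1+\delta}+\sqrt{1-\delta}\bigr) = \bigl(1-\sqrt{1-\delta^2}\bigr)/\delta < \delta$. All the algebra checks out; the only pedantic caveat is that the final strict inequality requires $\delta>0$ (if $\delta=0$ then $BA=I$ and $\delta_{\text{cg}}=\delta=0$), which is the implicit nondegenerate case the lemma intends.
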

\begin{proof}
An abbreviated proof appears in~\cite{Xu92a}, a more detailed proof
in~\cite{Hols94c}.
\end{proof}

\subsection{Krylov acceleration of nonsymmetric linear methods}

The convergence theory of the conjugate gradient iteration requires that the 
preconditioned operator $BA$ be $A$-self-adjoint (see~\cite{AMS90} for more
general conditions), 
which from Lemma~\ref{lemma:ONE} requires that $B$ be self-adjoint.
If a Schwarz method is employed which produces a nonsymmetric operator $B$,
then although $A$ is SPD, the theory of the previous section does not apply,
and a nonsymmetric solver such as conjugate gradients on the normal 
equations~\cite{AMS90}, GMRES~\cite{SaSc86}, CGS~\cite{Sonn89}, or 
Bi-CGstab~\cite{Vors92} must be used for the now non-$A$-SPD preconditioned 
system, $BAu=Bf$.

The conjugate gradient method for SPD problems has several nice properties
(good convergence rate, efficient three-term recursion, 
and minimization of the $A$-norm of the error at each step), 
some of which must be given up in order 
to generalize the method to nonsymmetric problems.
For example, while GMRES attempts to maintain a minimization property and
a good convergence rate, the three-term recursion must be sacrificed.  
Conjugate gradients on the normal equations maintains a minimization property
as well as the efficient three-term recursion, but sacrifices convergence speed
(the effective condition number is the square of the original system).
Methods such as CGS and Bi-CGstab sacrifice the minimization property, but
maintain good convergence speed and the efficient three-term recursion.
For these reasons, methods such as CGS and Bi-CGstab have become the methods 
of choice in many applications that give rise to nonsymmetric problems.  
Bi-CGstab has been shown to be more attractive than CGS in many situations 
due to the more regular convergence behavior~\cite{Vors92}.
In addition, Bi-CGstab does not require the application of the adjoint of
the preconditioning operator, which can be difficult to implement in the case
of some Schwarz methods.

In \S6, we shall use the preconditioned Bi-CGstab 
algorithm to accelerate nonsymmetric Schwarz methods.
In a sequence of numerical experiments, we shall compare the effectiveness
of this approach with unaccelerated symmetric and nonsymmetric Schwarz 
methods, and with symmetric Schwarz methods accelerated with conjugate 
gradients.

\section{Multiplicative Schwarz methods}

We develop a preconditioning theory of product algorithms which 
establishes sufficient conditions for producing SPD preconditioners.
This theory is used to establish sufficient SPD conditions for 
multiplicative DD and MG methods.

\subsection{A product operator}

Consider a product operator of the form:
\begin{equation}\label{product_operator}
E = I - BA = (I - \bar{B}_1 A) (I - B_0 A) (I - B_1 A)~,
\end{equation}
where $\bar{B}_1, B_0$ and $B_1$ are linear operators on $\calg{H}$, and
where $A$ is, as before, an SPD operator on $\calg{H}$.
We are interested in conditions for $\bar{B}_1, B_0$ and $B_1$, 
which guarantee that the implicitly defined operator $B$ is self-adjoint 
and positive definite and, hence, can be accelerated by using the 
conjugate gradient method.
\begin{lemma}
   \label{lemma:main_mult}
Sufficient conditions for symmetry and positivity of operator $B$,
implicitly defined by (\ref{product_operator}), are:
\begin{enumerate}
\item $\bar{B}_1 = B_1^T$~;
\item $B_0 = B_0^T$~;
\item $\| I - B_1 A \|_A < 1$~;
\item $B_0$ non-negative on $\calg{H}$~.
\end{enumerate}
\end{lemma}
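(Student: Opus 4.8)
The plan is to compute the operator $B$ explicitly from the product formula~(\ref{product_operator}), express it in a manifestly symmetric form using hypotheses (1) and (2), and then verify positivity by pairing $B$ with $A$ and exploiting hypotheses (3) and (4). First I would expand the product: writing $E_1 = I - B_1 A$, $E_0 = I - B_0 A$, and $\bar E_1 = I - \bar B_1 A = I - B_1^T A$ (using (1)), we have $E = \bar E_1 E_0 E_1$, so $I - BA = \bar E_1 E_0 E_1$ and hence $BA = I - \bar E_1 E_0 E_1$. Multiplying on the right by $A^{-1}$ gives an explicit formula for $B$ in terms of $B_0$, $B_1$, $B_1^T$, and $A$. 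Alternatively, and more cleanly, I would work with the $A$-adjoint: by Lemma~\ref{lemma:TWO} it suffices to show $BA$ is $A$-self-adjoint and $A$-positive, since by Lemma~\ref{lemma:ONE} the former is equivalent to $B = B^T$, and $(BAu,u)_A = (ABAu,u) = (BAu,Au)$ combined with $A$-positivity of $BA$ gives $(Bv,v) > 0$ for all $v = Au \neq 0$, i.e. $B$ positive definite.

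For $A$-self-adjointness: the $A$-adjoint of a product reverses order and conjugates factors, so $(\bar E_1 E_0 E_1)^* = E_1^* E_0^* \bar E_1^*$. Now $E_1^* = I - (B_1 A)^* = I - A^{-1}(B_1 A)^T A = I - A^{-1} A B_1^T A = I - B_1^T A = \bar E_1$, and similarly $\bar E_1^* = E_1$; by hypothesis (2), $E_0$ is $A$-self-adjoint (Lemma~\ref{lemma:TWO}), so $E_0^* = E_0$. Hence $(\bar E_1 E_0 E_1)^* = \bar E_1 E_0 E_1$, i.e. $E^* = E$, so $BA = I - E$ is $A$-self-adjoint, giving $B = B^T$. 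For $A$-positivity: I want $(BAu,u)_A > 0$, i.e. $(Eu,u)_A < (u,u)_A$ for all $u \neq 0$. Compute
\begin{equation*}
(Eu,u)_A = (\bar E_1 E_0 E_1 u, u)_A = (E_0 E_1 u, \bar E_1^* u)_A = (E_0 (E_1 u), E_1 u)_A,
\end{equation*}
using $\bar E_1^* = E_1$. Writing $w = E_1 u$, hypothesis (4) says $B_0$ is non-negative, so by Lemma~\ref{lemma:THREE}-type reasoning $B_0 A$ is $A$-non-negative, hence $(E_0 w, w)_A = (w,w)_A - (B_0 A w, w)_A \le (w,w)_A$. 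Therefore $(Eu,u)_A \le (E_1 u, E_1 u)_A = \|E_1 u\|_A^2 \le \|E_1\|_A^2 \|u\|_A^2 < \|u\|_A^2$ for $u \neq 0$, where the strict inequality is hypothesis (3). This is exactly $(Eu,u)_A < (u,u)_A$, so $(BAu,u)_A > 0$, and $BA$ is $A$-positive; combined with $A$-self-adjointness, $B$ is SPD.

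The main obstacle — really the only delicate point — is the treatment of the boundary case where $E_1 u = 0$: there the chain above gives $(Eu,u)_A = 0 = (u,u)_A$ only if $u = 0$, so I must check that $E_1 u = 0$ with $u \neq 0$ cannot happen, which follows since $\|E_1\|_A < 1$ forces $E_1$ to be injective (indeed $\|E_1 u\|_A \ge (1 - \|E_1\|_A)^{-1}$-type bounds are not even needed; injectivity is immediate from $\|E_1 u\|_A \le \|E_1\|_A \|u\|_A < \|u\|_A$ when $u \neq 0$, so $E_1 u = 0 \Rightarrow u = 0$ is not quite it — rather one argues: if $E_1 u = 0$ then $(Eu,u)_A = 0$, while we need $< (u,u)_A$, and $(u,u)_A > 0$ for $u \neq 0$, so the strict inequality still holds). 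So in fact the case split is harmless and the estimate $(Eu,u)_A \le \|E_1 u\|_A^2 < \|u\|_A^2$ is valid for all $u \neq 0$ directly. A secondary point worth stating carefully is the identity $E_1^* = \bar E_1$, which uses~(\ref{adjointofM}) together with hypothesis (1); everything else is bookkeeping.
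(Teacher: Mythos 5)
Your proposal is correct and follows essentially the same route as the paper: symmetry via $E^* = E$ using the order-reversal of the product and the identities $E_1^* = \bar E_1$, $E_0^* = E_0$ under hypotheses (1)--(2), and positivity via the decomposition $(BAu,Au) = \left[(u,Au) - (E_1u, AE_1u)\right] + (B_0 w, w)$ with hypotheses (3)--(4); your version merely phrases everything in the $A$-inner-product. The worry about the case $E_1 u = 0$ is, as you note yourself, vacuous since $\|E_1 u\|_A^2 \le \|E_1\|_A^2\|u\|_A^2 < \|u\|_A^2$ holds for every $u \ne 0$.
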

\begin{proof}
By Lemma \ref{lemma:TWO}, in order to prove symmetry of $B$, it is 
sufficient to prove that $E$ is $A$-self-adjoint.  
By using (\ref{adjointofM}), we get
\begin{eqnarray*}
E^* & = & A^{-1} E^T A \\
    & = & A^{-1} (I - A B_1^T) (I - A B_0^T) (I - A \bar{B}_1^T) A \\
    & = & (I - B_1^T A) (I - B_0^T A) (I - \bar{B}_1^T A) \\
    & = & (I - \bar{B}_1 A) (I - B_0 A) (I - B_1 A)   =  E,
\end{eqnarray*}
which follows from conditions 1 and 2.

Next, we prove that $(Bu,u)>0$, $\forall u \in \calg{H}$, $u \ne 0$.
Since $A$ is non-singular, this is equivalent to proving that $(BAu,Au) > 0$.
Using condition 1, we have that
\begin{eqnarray*}
(BAu,Au) & = & ((I-E)u,Au) \\
         & = & (u,Au) - ((I-B_1^T A) (I-B_0 A) (I-B_1 A) u, A u) \\
         & = & (u,Au) - ((I-B_0 A) (I-B_1 A) u, A (I-B_1 A) u) \\
         & = & (u,Au) - ((I-B_1 A) u, A (I-B_1 A) u) + ( B_0 w, w),
\end{eqnarray*}
where $w = A(I-B_1 A)u$.
By condition 4, we have that $(B_0 w,w) \ge 0$. 
Condition 3 implies that $((I-B_1 A) u, A(I-B_1A) u) < (u,Au)$ for $u \ne 0$.
Thus, the first two terms in the sum above are together positive, 
while the third one is non-negative, so that $B$ is positive.
\end{proof}

\begin{corollary}\label{corr_symm}
If $B_1 = B_1^T$, then condition~3 in Lemma~\ref{lemma:main_mult} is 
equivalent to $\rho(I-B_1 A) < 1$.
\end{corollary}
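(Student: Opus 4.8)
The plan is to reduce the statement to the already-established identity between the $A$-norm and the spectral radius for $A$-self-adjoint error propagators. Specifically, set $B = B_1$ and $E = I - B_1 A$, which is exactly the error propagator associated with the linear method whose preconditioner is $B_1$.

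First I would invoke the hypothesis $B_1 = B_1^T$: since $A$ is SPD and $B_1$ is self-adjoint, Lemma~\ref{lemma:TWO} shows that $E = I - B_1 A$ is $A$-self-adjoint. (Alternatively one can cite Lemma~\ref{lemma:ONE} to get that $B_1 A$ is $A$-self-adjoint and note $E = I - B_1 A$ inherits this.) Then I would apply Lemma~\ref{lemma:Anorm-eq-rho} directly, which under precisely these hypotheses ($A$ SPD, $B_1$ self-adjoint) gives the equality
\[
\| I - B_1 A \|_A = \rho(I - B_1 A).
\]
Since the two quantities coincide as real numbers, the condition $\| I - B_1 A \|_A < 1$ holds if and only if $\rho(I - B_1 A) < 1$, which is the claimed equivalence.

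There is essentially no obstacle here: the corollary is a specialization of Lemma~\ref{lemma:Anorm-eq-rho} to the operator $B_1$, and the only thing to check is that the hypothesis $B_1 = B_1^T$ is exactly what makes that lemma applicable. One could even present the proof as a single line: by Lemma~\ref{lemma:Anorm-eq-rho}, $\|I - B_1 A\|_A = \rho(I - B_1 A)$, so condition~3 is equivalent to $\rho(I - B_1 A) < 1$. If desired, I would add a remark that in the symmetric case condition~3 is therefore the minimal requirement — convergence of the linear smoother $I - B_1 A$ — linking it back to Lemma~\ref{lemma:rho}.
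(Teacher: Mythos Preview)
Your proof is correct and essentially identical to the paper's: the paper's one-line proof cites Lemma~\ref{lemma:a-s-a} and Lemma~\ref{lemma:TWO} directly, while you invoke Lemma~\ref{lemma:Anorm-eq-rho} (which is itself just the combination of those two lemmas). Either way, the argument is that $B_1 = B_1^T$ makes $I - B_1 A$ $A$-self-adjoint, whence $\|I - B_1 A\|_A = \rho(I - B_1 A)$.
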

\begin{proof}
This follows directly from Lemma~\ref{lemma:a-s-a} and Lemma \ref{lemma:TWO}.
\end{proof}

\subsection{Multiplicative domain decomposition}\label{sect_mdd}

Given the finite-dimensional Hil\-bert space $\calg{H}$, consider $J$ spaces 
$\calg{H}_k,~k=1,\ldots,J$, together with linear operators 
$I_k \in \bold{L}(\calg{H}_k,\calg{H})$, $\mbox{null}(I_k)=\{0\}$,  such that 
$I_k \calg{H}_k \subseteq \calg{H} = \sum_{k=1}^J I_k \calg{H}_k$.
We also assume the existence of another space $\calg{H}_0$, an
associated operator $I_0$ such that $I_0\calg{H}_0 \subseteq \calg{H}$,
and some linear operators 
$I^k \in \bold{L}(\calg{H},\calg{H}_k), k=0,\ldots,J$. 
For notational convenience, we shall denote the inner-products on $\calg{H}_k$
by $(\cdot,\cdot)$ (without explicit reference to the particular space).
Note that the inner-products on different spaces need not be related.

In a domain decomposition context, the spaces $\calg{H}_k$, $k=1,\ldots,J$, 
are typically associated with ~{\em local subdomains}~ of the original domain
on which the partial differential equation is defined. 
The space $\calg{H}_0$ is then a space associated with some global coarse mesh.
The operators $I_k, k=1,\ldots, J$, are usually inclusion operators, while
$I_0$ is an interpolation or prolongation operator 
(as in a two-level MG method). 
The operators $I^k, k=1,\ldots,J$, are usually orthogonal projection operators,
while $I^0$ is a restriction operator 
(again, as in a two-level MG method).

The error propagator of a multiplicative DD method on the space 
$\calg{H}$ employing the subspaces $I_k \calg{H}_k$ has the general 
form~\cite{DrWi89}:
\begin{equation}\label{mult_prop}
E = I - BA = (I - I_J \bar{R}_J I^J A) \cdots
             (I - I_0 R_0 I^0 A) \cdots
             (I - I_J R_J I^J A)~,
\end{equation}
where $\bar{R}_k$ and $R_k$, $k=1,\ldots,J$, are linear operators on 
$\calg{H}_k$, and $R_0$ is a linear operator on $\calg{H}_0$.
Usually the operators $\bar{R}_k$  and $R_k$ are constructed so that 
$\bar{R}_k \approx A_k^{-1}$ and $R_k \approx A_k^{-1}$, 
where $A_k$ is the operator defining the subdomain problem in $\calg{H}_k$. 
Similarly, $R_0$ is constructed so that $R_0\approx A_0^{-1}$. Actually,
quite often $R_0$ is a ``direct solve", i.e., $R_0 = A_0^{-1}$.
The subdomain problem operator $A_k$ is related to the restriction of 
$A$ to $\calg{H}_k$.  
We say that $A_k$ satisfies the {\it Galerkin conditions} or, in a
finite element setting, that it is {\it variationally} defined when
\begin{equation}
A_k = I^k A I_k,
\ \ \ \ \ I^k = I_k^T.
   \label{eqn:variational}
\end{equation}
Recall that the superscript ``$T$'' is to be interpreted as the adjoint in
the sense of~(\ref{eqn:adjointofN}), i.e., with respect to the 
inner-products in 
$\calg{H}$ and $\calg{H}_k$.

In the case of finite element, finite volume, or finite difference 
discretization of an elliptic problem, conditions~(\ref{eqn:variational}) 
can be shown to hold naturally for both the matrices and the abstract weak 
form operators for all subdomains $k = 1, \ldots, J$.
For the coarse space $\calg{H}_0$, often~(\ref{eqn:variational}) 
must be imposed algebraically.

Propagator (\ref{mult_prop}) can be thought of as the product operator 
(\ref{product_operator}), by choosing
$$
I - \bar{B}_1 A = \prod_{k=J}^1 (I - I_k \bar{R}_k I^k A)~,~~
B_0 = I_0 R_0 I^0~,~~
I - B_1 A = \prod_{k=1}^J (I - I_k R_k I^k A)~,
$$
where $\bar{B}_1$ and $B_1$ are known only implicitly.
(Note that we take the convention that the first term in the product appears on 
the left.)
This identification allows for the use of Lemma~\ref{lemma:main_mult} to 
establish sufficient conditions on the subdomain operators $\bar{R}_k$, 
$R_k$ and $R_0$ to guarantee that multiplicative domain decomposition yields 
an SPD operator $B$.
\begin{theorem}
   \label{theo:main_mult_dd}
Sufficient conditions for symmetry and positivity of the multiplicative
domain decomposition operator $B$, implicitly defined by (\ref{mult_prop}), 
are:
\begin{enumerate}
\item $I^k = c_k I_k^T~, ~~c_k > 0~, ~~k=0,\cdots, J$~;
\item $\bar{R}_k = R_k^T~,~~k=1,\cdots, J$~;
\item $R_0 = R_0^T$~;
\item $\left\| \prod_{k=1}^J (I - I_k R_k I^k A) \right\|_A < 1$~;
\item $R_0$ non-negative on $\calg{H}_0$~.
\end{enumerate}
\end{theorem}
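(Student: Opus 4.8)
The plan is to recognize propagator~(\ref{mult_prop}) as an instance of the abstract product operator~(\ref{product_operator}) under the identification indicated just above the statement, namely $I - \bar{B}_1 A = \prod_{k=J}^1 (I - I_k \bar{R}_k I^k A)$, $B_0 = I_0 R_0 I^0$, and $I - B_1 A = \prod_{k=1}^J (I - I_k R_k I^k A)$, and then to verify that hypotheses~1--5 force the four conditions of Lemma~\ref{lemma:main_mult}. Two of those are immediate: condition~4 of the theorem \emph{is} condition~3 of the lemma under this identification, and conditions~2 and~4 of the lemma follow at once from $B_0 = I_0 R_0 I^0 = c_0\, I_0 R_0 I_0^T$ (using hypothesis~1 with $k=0$), since then $B_0^T = c_0\, I_0 R_0^T I_0^T = B_0$ by hypothesis~3, while $(B_0 u,u) = c_0\,(R_0 I_0^T u, I_0^T u) \ge 0$ for all $u$ by hypothesis~5 and $c_0 > 0$, using~(\ref{eqn:adjointofN}) to move $I_0^T$ across the inner-product.

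The substantive step is condition~1 of Lemma~\ref{lemma:main_mult}, i.e. $\bar{B}_1 = B_1^T$. Since $A$ is self-adjoint, one computes from~(\ref{adjointofM}) that $(I - B_1 A)^* = I - B_1^T A$, so the claim is equivalent to $I - \bar{B}_1 A = (I - B_1 A)^*$, where $*$ denotes the $A$-adjoint. The $A$-adjoint reverses products, $(PQ)^* = Q^* P^*$, hence $(I - B_1 A)^* = \prod_{k=J}^1 (I - I_k R_k I^k A)^*$, and it suffices to prove the factor-by-factor identity $(I - I_k R_k I^k A)^* = I - I_k \bar{R}_k I^k A$ for each $k = 1,\ldots,J$. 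Using~(\ref{adjointofM}) together with $A^T = A$ gives $(I - I_k R_k I^k A)^* = I - (I^k)^T R_k^T I_k^T A$; hypothesis~1 gives $I^k = c_k I_k^T$, hence $(I^k)^T = c_k I_k$ and $I_k^T = c_k^{-1} I^k$, so $(I^k)^T R_k^T I_k^T = I_k R_k^T I^k$, and hypothesis~2 ($\bar{R}_k = R_k^T$) turns this into $I_k \bar{R}_k I^k$, as required. Taking the product over $k$ yields $(I - B_1 A)^* = \prod_{k=J}^1 (I - I_k \bar{R}_k I^k A) = I - \bar{B}_1 A$, establishing condition~1.

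With all four hypotheses of Lemma~\ref{lemma:main_mult} verified, that lemma delivers symmetry and positivity of the implicitly defined operator $B$, which is exactly the conclusion of the theorem. The one place that needs care is the bookkeeping in condition~1: one must track the scaling constants $c_k$ — they cancel in the combination $(I^k)^T R_k^T I_k^T$ precisely because hypothesis~1 relates $I^k$ and $I_k^T$ by a \emph{single} positive scalar — and one must respect the order-reversal of the $A$-adjoint of a product, so that the ``barred'' sweep $\prod_{k=J}^1$ comes out as the adjoint of the ``unbarred'' sweep $\prod_{k=1}^J$; this is where the stated convention that the first factor of a product stands on the left is used. Beyond this, no spectral or compactness arguments are needed, since the only inequality required is the $A$-norm bound, and that is handed to us directly by hypothesis~4.
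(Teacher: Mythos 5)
Your proof is correct and follows essentially the same route as the paper: identify (\ref{mult_prop}) with the product operator (\ref{product_operator}) and verify the four hypotheses of Lemma~\ref{lemma:main_mult}, with the adjoint computation via (\ref{adjointofM}) giving $\bar{B}_1 = B_1^T$ and the $c_0$-scaling giving symmetry and non-negativity of $B_0$. The only difference is organizational — you take the $A$-adjoint factor by factor rather than of the whole product at once — which changes nothing of substance.
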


\begin{proof}
We show that the sufficient conditions of Lemma \ref{lemma:main_mult} are 
satisfied.
First, we prove that $\bar{B}_1 = B_1^T$, which, by Lemma~\ref{lemma:TWO}, 
is equivalent to proving that $(I-B_1 A)^* = (I - \bar{B}_1 A)$.
By using (\ref{adjointofM}), we have
$$
\left( \prod_{k=1}^J (I - I_k R_k I^k A) \right)^*
 = A ^{-1} \left( \prod_{k=1}^J (I - I_k R_k I^k A) \right)^T A
 = \prod_{k=J}^1 (I - (I^k)^T R^T_k (I_k)^T A)~,
$$
which equals $(I - \bar{B}_1 A)$ under conditions 1 and 2 of the theorem.
The symmetry of $B_0$ follows immediately from conditions 1 and 3; indeed,
$$
B_0^T = (I_0 R_0 I^0)^T = (I^0)^T R_0^T (I_0)^T = 
        (c_0 I_0) R_0 (c_0^{-1} I^0) =
             I_0 R_0 I^0 = B_0~.
$$

By condition~4 of the theorem, condition~3 of Lemma~\ref{lemma:main_mult} holds 
trivially. 
The theorem follows by realizing that condition 4 of 
Lemma~\ref{lemma:main_mult} is also satisfied, since,
$$
(B_0 u,u) = (I_0 R_0 I^0 u,u) 
      = (R_0 I^0 u, I_0^T u) 
      = c_0^{-1} (R_0 I^0 u, I^0 u ) \geq 0~,
  ~~\forall u\in \calg{H}~.
$$
\end{proof}

\begin{remark} \label{remark:dd_mult_galerkin}
Note that one sweep through the subdomains, followed by a coarse problem solve,
followed by another sweep through the subdomains in reversed order, 
gives rise an error propagator of the form (\ref{mult_prop}).
Also, note that no conditions are imposed on the nature of the operators $A_k$
associated with each subdomain.
In particular, the theorem {\it does not} require that the 
variational conditions are satisfied. 
While it is natural for condition~(\ref{eqn:variational}) to hold between the
fine space and the spaces associated with each subdomain, these conditions
are often difficult to enforce for the coarse problem.
Violation of variational conditions can occur, for example, when 
complex coefficient discontinuities do not lie along element boundaries on 
the coarse mesh (we present numerical results for such a problem
in~\S\ref{sec:numerical}).
The theorem also does not require that the overall multiplicative
DD method be convergent.
\end{remark}

\begin{remark} \label{remark:matrices}
The results of the theorem apply for abstract operators on general
finite-dimensional Hilbert spaces with arbitrary inner-products.
They hold in particular for matrix operators on $\bbbb{R}^n$, equipped
with the Euclidean inner-product, or the discrete $L^2$ inner-product.
In the former case, the superscript ``$T$'' corresponds to the standard 
matrix transpose.
In the latter case, the matrix representation of the adjoint is a scalar
multiple of the matrix transpose; the scalar may be different from unity
when the adjoint involves two different spaces, and in the case of
prolongation and restriction.
This possible constant in the case of the discrete $L^2$ inner-product is 
absorbed in the factor $c_k$ in condition 1.
This allows for an easy verification of the conditions of the theorem in
an actual implementation, where the operators are represented as matrices,
and where the inner-products do not explicitly appear in the algorithm.
\end{remark}

\begin{remark}
Condition 1 of the theorem (with $c_k=1$) for $k=1,\ldots,J$ is usually 
satisfied trivially for domain decomposition methods.
For $k=0$, it may have to be imposed explicitly.
Condition 2 of the theorem allows for several alternatives which 
give rise to an SPD preconditioner, namely:
(1) use of exact subdomain solvers (if $A_k$ is a symmetric operator);
(2) use of identical symmetric subdomain solvers in the forward and
    backward sweeps;
(3) use of the adjoint of the subdomain solver on the second sweep.
Condition~3 is satisfied when the coarse problem is symmetric and the
solve is an exact one, which is usually the case.  If not, the coarse
problem solve has to be symmetric.
Condition~4 in Theorem~\ref{theo:main_mult_dd} is clearly a non-trivial one; 
it is essentially the assumption that the multiplicative DD method without a 
coarse space is convergent.
Convergence theories for DD methods can be quite technical and depend on such 
things as the discretization, the subdomain number, shape, and size, and
the regularity of the solution~\cite{BPWX91a,DrWi89,Xu92a}.
However, since variational conditions hold naturally between the fine space
and each subdomain space for nearly any formulation of a DD method, very 
general convergence theorems can be derived, if one is not concerned about
the actual rate of convergence.
Using the Schwarz theory framework in any of~\cite{BPWX91a,DrWi89,Xu92a}, 
it can be shown that Condition~4 in Theorem~\ref{theo:main_mult_dd} 
(convergence of multiplicative DD without a coarse space) holds if the
variational conditions~(\ref{eqn:variational}) hold, and if the subdomain
solvers $R_k$ are SPD.
A proof of this result may be found for example in~\cite{Hols94c}.
Condition 5 is satisfied for example when the coarse problem is SPD and the 
solve is exact.

\end{remark}

Consider now the case when the subspaces together do not 
span the entire space, except when the coarse space is included.
The above theorem can be applied with $R_0=0$, and by viewing the 
coarse space as simply one of the spaces $\calg{H}_k$, $k\ne 0$.
In this case, the error propagation operator $E$ takes the form:
\begin{equation}\label{mult_prop_v2}
~~~~~I - BA = (I - I_J \bar{R}_J I^J A) \cdots
             (I - I_1 \bar{R}_1 I^1 A) (I - I_1 R_1 I^1 A) \cdots
             (I - I_J R_J I^J A)~.
\end{equation}
This leads to the following corollary.
\begin{corollary}
   \label{corr:main_mult_dd_nonoverlap}
Sufficient conditions for symmetry and positivity of the multiplicative
domain decomposition operator $B$, implicitly defined by (\ref{mult_prop_v2}),
are:
\begin{enumerate}
\item $I^k = c_k I_k^T~, ~~c_k > 0~, ~~k=1,\cdots, J$~;
\item $\bar{R}_k = R_k^T~,~~k=1,\cdots, J$~;
\item $\left\| \prod_{k=1}^J (I - I_k R_k I^k A) \right\|_A < 1$~.
\end{enumerate}
\end{corollary}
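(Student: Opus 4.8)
The plan is to obtain the corollary as an immediate specialization of Theorem~\ref{theo:main_mult_dd}. First I would observe that the propagator (\ref{mult_prop_v2}) is precisely (\ref{mult_prop}) with the coarse-level correction removed, i.e.\ with $R_0 = 0$: the middle factor $I - I_0 R_0 I^0 A$ then collapses to the identity, leaving exactly the two mirror-image sweeps $\prod_{k=J}^1 (I - I_k \bar{R}_k I^k A)$ and $\prod_{k=1}^J (I - I_k R_k I^k A)$ appearing in (\ref{mult_prop_v2}). Under the identification introduced just before Theorem~\ref{theo:main_mult_dd}, this means $I - \bar{B}_1 A = \prod_{k=J}^1(I - I_k\bar{R}_k I^k A)$, $B_0 = 0$, and $I - B_1 A = \prod_{k=1}^J(I - I_k R_k I^k A)$.

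Next I would check the five hypotheses of Theorem~\ref{theo:main_mult_dd} against the three hypotheses of the corollary. Conditions~1 and~2 of the corollary are conditions~1 and~2 of the theorem restricted to $k = 1,\ldots,J$; the $k=0$ instance of the theorem's condition~1 is irrelevant here since $I_0 R_0 I^0 = 0$ whatever $I^0$ is. Condition~3 of the corollary is verbatim condition~4 of the theorem. Finally, conditions~3 and~5 of the theorem, symmetry and non-negativity of $R_0$, hold trivially because $R_0 = 0$ is both self-adjoint and non-negative. Thus all hypotheses of Theorem~\ref{theo:main_mult_dd} are met, and its conclusion yields that the implicitly defined $B$ is self-adjoint and positive definite.

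Alternatively, and just as quickly, the corollary follows directly from Lemma~\ref{lemma:main_mult} with $B_0 = 0$: conditions~2 and~4 of that lemma (symmetry and non-negativity of $B_0$) are then vacuous, condition~3 is hypothesis~3 of the corollary, and condition~1, $\bar{B}_1 = B_1^T$, follows from hypotheses~1 and~2 by the same transpose-of-a-product computation used in the proof of Theorem~\ref{theo:main_mult_dd} (using $(I^k)^T = c_k^{-1} I_k$ together with $\bar{R}_k = R_k^T$ to match the factors in reversed order).

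I do not expect any substantive obstacle. The only points requiring care are the bookkeeping of the product ordering, so that the reversed sweep of the $\bar{R}_k$ is genuinely the $A$-adjoint of the forward sweep of the $R_k$, and the remark that discarding $\calg{H}_0$ as a distinguished coarse space does not weaken anything: any space actually needed to make the contraction bound in hypothesis~3 hold is simply relabelled as one of $\calg{H}_1,\ldots,\calg{H}_J$. No new estimates beyond those already established are involved.
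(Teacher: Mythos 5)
Your proposal is correct and matches the paper's own reasoning: the corollary is presented there as an immediate specialization of Theorem~\ref{theo:main_mult_dd} obtained by setting $R_0=0$ (equivalently $B_0=0$ in Lemma~\ref{lemma:main_mult}), with the coarse space, if needed, relabelled as one of the $\calg{H}_k$, $k\ne 0$. Your verification that the remaining hypotheses of the theorem become vacuous or coincide with the corollary's three conditions is exactly the intended argument.
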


\vspace*{0.3cm}

\begin{remark}
Condition~3 is equivalent to requiring convergence of the overall 
multiplicative Schwarz method.
This follows from the relationship
$$
\| E \|_A = \| \bar{E}^* \bar{E} \|_A = \| \bar{E} \|_A^2 < 1,
$$
where $\bar{E} = \prod_{k=1}^J (I - I_k R_k I^k A)$.
\end{remark}
\begin{remark}
If, in addition to conditions of the corollary, 
it holds that $R_1 = (I^1 A I_1)^{-1}$, i.e., it corresponds to an 
exact solve with a variationally defined subspace problem operator 
in the sense of~(\ref{eqn:variational}), then 
$$
(I - I_1 \bar{R}_1 I^1 A) (I - I_1 R_1 I^1 A)
= I - I_1 R_1 I^1 A,
$$
since ~$I-I_1 (I^1 A I_1)^{-1} I_1 A$~ is a projector.
Therefore, space $\calg{H}_1$ (for example, the coarse space)
needs to be visited only once in the application of~(\ref{mult_prop_v2}).
\end{remark}

\subsection{Multiplicative multigrid}\label{sect_mmg}

Consider the Hilbert space $\calg{H}$, $J$ spaces $\calg{H}_k$
together with linear operators $I_k \in \bold{L}(\calg{H}_k,\calg{H})$,
$\mbox{null}(I_k)=0$,
such that the spaces $I_k \calg{H}_k$ are nested and satisfy
$
I_1 \calg{H}_1 \subseteq I_2 \calg{H}_2 \subseteq 
     \cdots \subseteq I_{J-1} \calg{H}_{J-1}
            \subseteq \calg{H}_J \equiv \calg{H}.
$
As before we denote the $\calg{H}_k$-inner-products by $(\cdot,\cdot)$, 
since it will be clear from the arguments which inner-product is intended.
Again, the inner-products are not necessarily related in any way.
We assume also the existence of operators 
$I^k \in \bold{L}(\calg{H},\calg{H}_k)$.

In a multigrid context, the spaces $\calg{H}_k$ are 
typically associated with a nested hierarchy of successively refined meshes,
with $\calg{H}_1$ being the coarsest mesh, and $\calg{H}_J$ being
the fine mesh on which the PDE solution is desired.
The linear operators $I_k$ are prolongation operators, constructed 
from given interpolation or prolongation operators that operate between 
subspaces, i.e.,
$I_{k-1}^k \in \bold{L}(\calg{H}_{k-1},\calg{H}_k)$.  
The operator $I_k$ is then constructed (only as a theoretical tool)
as a composite operator
\begin{equation}
I_k = I_{J-1}^J I_{J-2}^{J-1} \cdots I_{k+1}^{k+2} I_k^{k+1},
\ \ \ k = 1, \ldots, J-1.
  \label{eqn:composite}
\end{equation}
The composite restriction operators $I^k$, $k=1,\ldots, J-1$, are constructed 
similarly from some given restriction operators 
$I_k^{k-1} \in \bold{L}(\calg{H}_k,\calg{H}_{k-1})$.

The coarse problem operators $A_k$ are related to the restriction of 
$A$ to $\calg{H}_k$.  
As in the case of DD methods, we say that $A_k$ is {\it variationally} 
defined or satisfies the {\em Galerkin conditions} when 
conditions~(\ref{eqn:variational}) hold.
It is not difficult to see that conditions~(\ref{eqn:variational}) are 
equivalent to the following recursively defined variational conditions:
\begin{equation}
A_k = I_{k+1}^k A_{k+1} I_k^{k+1},
\ \ \ \ \ I_{k+1}^k = (I_k^{k+1})^T,
   \label{eqn:variational2}
\end{equation}
when the composite operators $I_k$ appearing in~(\ref{eqn:variational})
are defined as in~(\ref{eqn:composite}).

In a finite element setting, conditions~(\ref{eqn:variational2})
can be shown to hold in ideal situations, for both the stiffness matrices 
and the abstract weak form operators, for a nested sequence of 
successively refined finite element meshes.
In the finite difference or finite volume method setting, 
conditions~(\ref{eqn:variational2}) must often be imposed algebraically,
in a recursive fashion.

The error propagator of a multiplicative V-cycle MG method is defined 
implicitly:
\begin{equation}\label{errorpropMG}
E = I- BA =  I - D_J A_J ,
\end{equation}
where $A_J= A$, and where operators $D_k,~k = 2,\ldots, J$ are 
defined recursively,
\begin{eqnarray}
I - D_k A_k &=&
    (I - \bar{R}_kA_k) (I - I_{k-1}^k D_{k-1} I_k^{k-1}A_k) (I - R_k A_k),
     \ k = 2,\ldots,J, \label{errorpropMG2} \\
D_1 &=&  R_1~. \label{errorpropMG3}
\end{eqnarray}
Operators $\bar{R}_k$ and $R_k$ are linear operators on $\calg{H}_k$,
usually called {\em smoothers}.
The linear operators $A_k\in L(\calg{H}_k,\calg{H}_k)$ define the
coarse problems.
They often satisfy the variational condition (\ref{eqn:variational2}).

The error propagator (\ref{errorpropMG}) can be thought of as an 
operator of the form (\ref{product_operator}) with
$$
\bar{B}_1 = \bar{R}_J~,~~
B_0 = I_{J-1}^J D_{J-1} I_J^{J-1}~,~~
B_1 = R_J~.
$$
Such an identification with the product method allows for
use of the result in Lemma~\ref{lemma:main_mult}.
The following theorem establishes sufficient conditions for the 
subspace operators $R_k$, $\bar{R}_k$ and $A_k$ in order to generate 
an (implicitly defined)
SPD operator $B$ that can be accelerated with conjugate gradients.
\begin{theorem}
   \label{theo:main_mult_mg}
Sufficient conditions for symmetry and positivity of the multiplicative
multigrid operator $B$, implicitly defined by (\ref{errorpropMG}), 
(\ref{errorpropMG2}), and (\ref{errorpropMG3}), are
\begin{enumerate}
\item $A_k$ is SPD on $\calg{H}_k~,~~k=2,\ldots,J$~;
\item $I_k^{k-1} = c_k (I_{k-1}^k)^T, \ \ \ c_k > 0, \ \ \ k=2,\ldots,J$~;
\item $\bar{R}_k = R_k^T, \ \ \ k=2,\ldots,J$~;
\item $R_1 = R_1^T$~;
\item $\left\| I - R_J A \right\|_A < 1$,~;
\item $\left\| I - R_k A_k \right\|_{A_k} \le 1$, \ \ \ $k=2,\ldots,J-1$~;
\item $R_1$ non-negative on $\calg{H}_1$~.
\end{enumerate}
\end{theorem}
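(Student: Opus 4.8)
The plan is to recognize the recursively defined V-cycle propagator (\ref{errorpropMG})--(\ref{errorpropMG3}) as an instance of the three-factor product operator (\ref{product_operator}) at \emph{every} level of the hierarchy, and then to run an induction up the grid, applying (the relevant parts of) Lemma~\ref{lemma:main_mult} at each step. Concretely, for each $k\in\{2,\ldots,J\}$ the identity (\ref{errorpropMG2}) exhibits $I-D_kA_k$ in the product form $(I-\bar B_1A_k)(I-B_0A_k)(I-B_1A_k)$ with $\bar B_1=\bar R_k$, $B_1=R_k$, and $B_0=I_{k-1}^kD_{k-1}I_k^{k-1}$. Condition~1 guarantees $A_k$ is SPD on $\calg{H}_k$, so Lemma~\ref{lemma:main_mult} and the auxiliary lemmas it rests on (Lemma~\ref{lemma:TWO} in particular) are available verbatim with $A$ replaced by $A_k$ and $\calg{H}$ by $\calg{H}_k$; nothing in their statements or proofs is special to the top space.

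The core of the argument is the claim, to be proved by induction on $k=1,\ldots,J-1$, that $D_k$ is self-adjoint and non-negative on $\calg{H}_k$. The base case is immediate: $D_1=R_1$ is self-adjoint by condition~4 and non-negative by condition~7. For the inductive step, assume $D_{k-1}=D_{k-1}^T$ and $D_{k-1}$ non-negative on $\calg{H}_{k-1}$. Using condition~2 in the form $(I_{k-1}^k)^T=c_k^{-1}I_k^{k-1}$, one checks exactly as in the proof of Theorem~\ref{theo:main_mult_dd} that $B_0=I_{k-1}^kD_{k-1}I_k^{k-1}$ satisfies $B_0=B_0^T$ and $(B_0w,w)=c_k^{-1}(D_{k-1}I_k^{k-1}w,I_k^{k-1}w)\ge0$ for all $w\in\calg{H}_k$. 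Now observe that the symmetry conclusion of Lemma~\ref{lemma:main_mult} uses only its hypotheses~1 and~2 (namely $\bar B_1=B_1^T$ and $B_0=B_0^T$): the first is condition~3 of the theorem and the second was just established, so $I-D_kA_k$ is $A_k$-self-adjoint, and Lemma~\ref{lemma:TWO} over $\calg{H}_k$ gives $D_k=D_k^T$.

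For non-negativity, re-examine the positivity computation inside the proof of Lemma~\ref{lemma:main_mult}, which with $A\to A_k$, $B_1\to R_k$, $B_0\to I_{k-1}^kD_{k-1}I_k^{k-1}$ reads
\[
(D_kA_ku,A_ku)=(u,A_ku)-\bigl((I-R_kA_k)u,\,A_k(I-R_kA_k)u\bigr)+(B_0w,w),\qquad w=A_k(I-R_kA_k)u.
\]
Condition~6 gives $\|I-R_kA_k\|_{A_k}\le1$, so the difference of the first two terms is $\ge0$, and the last term is $\ge0$ by non-negativity of $B_0$; since $A_k$ is non-singular this forces $D_k$ non-negative on $\calg{H}_k$, closing the induction. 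It then remains to apply Lemma~\ref{lemma:main_mult} once more at the top level $k=J$ (where $A_J=A$): hypothesis~1 is condition~3 ($\bar R_J=R_J^T$); hypotheses~2 and~4 hold because $B_0=I_{J-1}^JD_{J-1}I_J^{J-1}$ is self-adjoint and non-negative on $\calg{H}$ by the induction (applied at $k=J-1$) together with condition~2; and hypothesis~3 is condition~5, $\|I-R_JA\|_A<1$. Lemma~\ref{lemma:main_mult} then delivers that the implicitly defined $B$ in (\ref{errorpropMG}) is symmetric and positive definite.

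The step I expect to be the main obstacle is the inductive one, and specifically the need to use Lemma~\ref{lemma:main_mult} in a slightly disassembled form rather than as a black box: the intermediate smoothers satisfy only the \emph{non-strict} bound $\|I-R_kA_k\|_{A_k}\le1$ (condition~6), so the lemma cannot be invoked verbatim to conclude positivity of $D_k$. One must instead notice that its symmetry conclusion requires nothing about the norm of $I-B_1A$, and that its positivity computation degrades precisely to non-negativity when the strict inequality is relaxed; the strict inequality is then spent only once, at the finest level, via condition~5. Isolating this observation as a small auxiliary lemma (or simply citing the relevant lines of the proof of Lemma~\ref{lemma:main_mult}) would make the induction clean. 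Everything else---the transpose bookkeeping for the composite prolongations and restrictions through condition~2, and the use of condition~1 to let $A_k$ play the role of $A$ on $\calg{H}_k$---is routine.
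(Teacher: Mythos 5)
Your proposal is correct and follows essentially the same route as the paper's proof: an induction up the grid hierarchy showing $D_k$ is symmetric and non-negative, treating each level of (\ref{errorpropMG2}) as an instance of the product operator (\ref{product_operator}) over $\calg{H}_k$, with the non-strict bound of condition~6 yielding only non-negativity at intermediate levels and the strict bound of condition~5 spent once at the finest level. The only cosmetic difference is that you run a single combined induction for symmetry and non-negativity where the paper runs the two inductions separately, and your remark that Lemma~\ref{lemma:main_mult} must be used in disassembled form at the intermediate levels accurately reflects what the paper actually does.
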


\begin{proof}
Since $\bar{R}_J = R_J^T$, we have that $\bar{B}_1 = B_1^T$, which gives 
condition~1 of Lemma~\ref{lemma:main_mult}.
Now, $B_0$ is symmetric if and only if
$$
B_0 = I_{J-1}^J D_{J-1} I_J^{J-1} 
    = (c_J^{-1} I_J^{J-1})^T D_{J-1}^T (c_J I_{J-1}^J)^T
    = B_0^T,
$$
which holds under condition 2 and a symmetry requirement for $D_{J-1}$.
We will prove that $D_{J-1} = D_{J-1}^T$ by induction.
First, $D_1 = D_1^T$ since $R_1 = R_1^T$.
By Lemma~\ref{lemma:TWO} and condition 1, $D_k$ is symmetric if and only if 
$E_k = I- D_k A_k$ is $A_k$-self-adjoint.
By using (\ref{adjointofM}), we have that
\begin{eqnarray*}
E_k^* & = & A_k^{-1} \left(
      (I - \bar{R}_k A_k) (I - I_{k-1}^k D_{k-1} I_k^{k-1} A_k) (I - R_k A_k)
      \right)^T A_k \\
      & = & A_k^{-1} 
             (I - A_k^T R_k^T ) 
             (I - A_k^T (I_k^{k-1})^T D_{k-1}^T (I_{k-1}^k)^T )
             (I - A_k^T \bar{R}_k^T )
            A_k \\
      & = & (I - R_k^T A_k) 
            A_k^{-1} (I - A_k^T (I_k^{k-1})^T D_{k-1}^T (I_{k-1}^k)^T ) A_k
            (I - \bar{R}_k^T A_k) \\
      & = & (I - \bar{R}_k A_k) 
            (I - (c_k I_{k-1}^k) D_{k-1}^T (c_k^{-1} I_k^{k-1}) A_k)
            (I - R_k A_k)~,
\end{eqnarray*}
where we have used conditions 1, 2 and 3.
Therefore, $E_k^* = E_k$,  if $D_{k-1} = D_{k-1}^T$.
Hence, the result follows by induction on $k$.

Condition~3 of Lemma~\ref{lemma:main_mult} follows trivially by condition~5 of 
the theorem.

It remains to verify condition~4 of Lemma~\ref{lemma:main_mult}, 
namely that $B_0$ is non-negative. This is equivalent to showing that
$D_{J-1}$ is non-negative on $\calg{H}_{J-1}$. 
This will follow again from an induction argument.
First, note that $D_1 = R_1$ is non-negative on $\calg{H}_1$.
Next, we prove that $(D_k v_k, v_k)\geq 0$, $\forall v_k \in \calg{H}_k$, or,
equivalently, since $A_k$ is non-singular, that $(D_k A_kv_k, A_kv_k)\geq 0$.
So, for all $v_k \in \calg{H}_k$,
\begin{eqnarray*}
(D_k A_k v_k, A_k v_k) 
   & = & (A_k v_k, v_k) - (A_k E_k v_k, v_k) \\
   & = & (A_k v_k, v_k) \\
&&   - (A_k (I-\bar{R}_k A_k) (I- I_{k-1}^k D_{k-1} I_k^{k-1} A_k) (I-R_kA_k) 
         v_k,v_k) \\
   & = & (A_k v_k, v_k) \\
&&   - (A_k (I - I_{k-1}^k D_{k-1} I_k^{k-1} A_k) (I-R_kA_k) v_k, 
     (I-R_kA_k)v_k) \\
   & = & (A_k v_k, v_k) - (A_k (I-R_kA_k) v_k, (I-R_kA_k)v_k) \\
   & & +~(A_k I_{k-1}^k D_{k-1} I_k^{k-1} A_k (I-R_kA_k) v_k, (I-R_kA_k)v_k) \\
   & = & (v_k,v_k)_{A_k} - (S_k v_k, S_k v_k)_{A_k} 
       + c_k^{-1} (D_{k-1} v_{k-1},v_{k-1})
\end{eqnarray*}
where 
$S_k = I-R_kA_k$ and $v_{k-1}=I_k^{k-1} A_k (I-R_kA_k) v_k\in\calg{H}_{k-1}$.
By condition 6, the first two terms in the above sum add up to a non-negative 
value.
Hence, $D_k$ is non-negative if $D_{k-1}$ is non-negative.
Condition 4 of Lemma ~\ref{lemma:main_mult} follows.
\end{proof}
\begin{corollary}
If the fine grid smoother is symmetric, i.e., $R_J = \bar{R}_J^T$, then 
condition~5 in Theorem~\ref{theo:main_mult_mg} is equivalent 
to $\rho(I-R_J A)<1$.
\end{corollary}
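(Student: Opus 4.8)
The plan is to follow verbatim the template of Corollary~\ref{corr_symm}. Condition~5 of Theorem~\ref{theo:main_mult_mg} constrains only the $A$-norm of the fine-grid error propagator $E := I - R_J A$ (recall $A_J = A$ and $\calg{H}_J \equiv \calg{H}$), so it suffices to show that symmetry of the fine-grid smoother forces $\|E\|_A = \rho(E)$; the asserted equivalence then drops out immediately, since the two inequalities $\|E\|_A < 1$ and $\rho(E) < 1$ become literally the same statement.

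First I would unwind the hypothesis. In the identification preceding the theorem one has $B_1 = R_J$, and condition~3 of the theorem already supplies $\bar{R}_J = R_J^T$. Hence the stated assumption $R_J = \bar{R}_J^T$ is, in the context of the theorem, nothing more than $R_J = R_J^T$, i.e. $R_J$ is self-adjoint on $\calg{H}$ with respect to $(\cdot,\cdot)$. (It is worth remarking explicitly that this introduces nothing genuinely new beyond the conditions already assumed in the theorem.)

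Next, with $R_J$ self-adjoint I would apply Lemma~\ref{lemma:TWO} with $B = R_J$ to conclude that $E = I - R_J A$ is $A$-self-adjoint, and then Lemma~\ref{lemma:a-s-a} (equivalently, Lemma~\ref{lemma:Anorm-eq-rho} used directly) to obtain $\|E\|_A = \rho(E)$. Therefore condition~5, namely $\|I - R_J A\|_A < 1$, holds if and only if $\rho(I - R_J A) < 1$, which is exactly the claim.

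I do not expect any real obstacle here: the argument is purely formal and exactly parallel to the proof of Corollary~\ref{corr_symm}. The only point that needs a little care is the bookkeeping of inner products --- ``self-adjoint'' for $R_J$ refers to $(\cdot,\cdot)$ while ``$A$-self-adjoint'' for $E$ refers to $(\cdot,\cdot)_A$ --- and Lemmas~\ref{lemma:TWO} and~\ref{lemma:a-s-a} are precisely the bridges between the two notions, so invoking them in this order is all that is required.
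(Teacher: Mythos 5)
Your main line of argument is exactly the paper's: the paper proves this corollary by citing Corollary~\ref{corr_symm}, whose own proof is precisely the chain you spell out (Lemma~\ref{lemma:TWO} to conclude that $I-R_JA$ is $A$-self-adjoint once $R_J$ is self-adjoint, then Lemma~\ref{lemma:a-s-a} to identify $\|I-R_JA\|_A$ with $\rho(I-R_JA)$). That part is correct and needs no change.

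However, your ``unwinding of the hypothesis'' contains a genuine logical slip. Condition~3 of Theorem~\ref{theo:main_mult_mg} gives $\bar{R}_J = R_J^T$, hence $\bar{R}_J^T = R_J$; substituting this into the stated hypothesis $R_J = \bar{R}_J^T$ yields only the tautology $R_J = R_J$, \emph{not} $R_J = R_J^T$. So self-adjointness of $R_J$ does not follow the way you claim, and your parenthetical remark that the hypothesis ``introduces nothing genuinely new'' is false --- if it were true, the corollary would be asserting that condition~5 is equivalent to $\rho(I-R_JA)<1$ under the theorem's hypotheses alone, which fails whenever $R_J$ is nonsymmetric (a forward Gauss--Seidel pre-smoother with its adjoint as post-smoother satisfies condition~3, yet then $\|I-R_JA\|_A$ and $\rho(I-R_JA)$ need not coincide). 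The intended reading, signalled by the words ``the fine grid smoother is symmetric'' and by the parallel with Corollary~\ref{corr_symm} (whose hypothesis is $B_1=B_1^T$ with $B_1=R_J$), is the \emph{additional} assumption $R_J=R_J^T$ --- equivalently, under condition~3, $R_J=\bar{R}_J$; the printed formula is best regarded as a misprint. Once the hypothesis is read this way, the remainder of your argument goes through verbatim and coincides with the paper's.
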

\begin{proof}
This follows directly from Corollary \ref{corr_symm}.
\end{proof}

\begin{remark} \label{remark:mg_mult_galerkin}
The coarse grid operators $A_k$, $k=2,\ldots,J-1$, need only be SPD. 
They need not satisfy the Galerkin conditions~(\ref{eqn:variational2}).
\end{remark}

\begin{remark}
As noted earlier in Remark~\ref{remark:matrices}, the conditions
and conclusions of the theorem can be interpreted completely in terms of the
usual matrix representations of the multigrid operators.
\end{remark}

\begin{remark}
Condition~1 of the theorem requires that the coarse grid operators 
(except for the coarsest one) be SPD.
This is easily satisfied when they are constructed either by 
discretization or by explicitly using the Galerkin or variational condition.
Condition~2 requires restriction and prolongation to be adjoints, 
possibly multiplied by an arbitrary constant.
Condition~3 of the theorem is satisfied when 
the number of pre-smoothing steps equals the number of post-smoothing steps, 
and in addition one of the following is imposed:
(1) use of the same symmetric smoother for both pre- and post-smoothing;
(2) use of the adjoint of the pre-smoothing operator as the post-smoother.
Condition~4 requires a symmetric coarsest mesh solver.
When the coarsest mesh problem is SPD, the symmetry of $R_1$ is 
satisfied when it corresponds to an exact solve (as is typical for MG methods).
Condition~5 is a convergence requirement on the fine space smoother.
Condition~6 requires the coarse grid smoothers to be non-divergent.
The nonnegativity requirement for $R_1$ is a non-trivial one; however, if
$A_1$ is SPD, it is immediately satisfied when the operator corresponds to 
an exact solve. 
\end{remark}

Theorem \ref{theo:main_mult_mg} applies to standard multigrid methods only.
The conditions of the theorem, and condition~5 in particular, cannot be 
satisfied in the cases of hierarchical basis multigrid methods~\cite{BDY88}, 
and multigrid methods with local smoothing on locally refined regions.  
The latter methods are covered in the following theorem, where the conditions
that guarantee positivity of the preconditioner (conditions~5,6 and 7 in
Theorem~\ref{theo:main_mult_mg}), are replaced by a convergence condition
on the underlying iterative method.
\begin{theorem}
   \label{corr:main_mult_mg_nonoverlap}
Sufficient conditions for symmetry and positivity of the multiplicative
multigrid operator $B$, implicitly defined by (\ref{errorpropMG}), 
(\ref{errorpropMG2}), and (\ref{errorpropMG3}), are
\begin{enumerate}
\item $A_k$ is SPD on $\calg{H}_k~,~~k=2,\ldots,J$~;
\item $I_k^{k-1} = c_k (I_{k-1}^k)^T, \ \ \ c_k > 0, \ \ \ k=2,\ldots,J$~;
\item $\bar{R}_k = R_k^T, \ \ \ k=2,\ldots,J$~;
\item $R_1 = R_1^T$~;
\item $\left\| I - B A \right\|_A < 1$~.
\end{enumerate}
\end{theorem}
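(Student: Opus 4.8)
The plan is to split the claim into two parts --- self-adjointness of $B$ and positivity of $B$ --- and to exploit the fact that conditions 1--4 here are \emph{identical} to conditions 1--4 of Theorem~\ref{theo:main_mult_mg}. Since the self-adjointness half of the proof of that theorem uses only those four conditions, I would reuse it essentially verbatim; the genuinely new content is that positivity now follows directly from the single convergence hypothesis (condition~5), rather than having to be extracted from the finer structural conditions 5--7 of Theorem~\ref{theo:main_mult_mg}.

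First I would show $B = B^{T}$. Identify $E = I - BA$ with the product form~(\ref{product_operator}) via $\bar B_1 = \bar R_J$, $B_0 = I_{J-1}^J D_{J-1} I_J^{J-1}$, $B_1 = R_J$. Condition~3 gives $\bar B_1 = B_1^{T}$. For $B_0 = B_0^{T}$, condition~2 reduces the question to symmetry of $D_{J-1}$, which I would establish by induction on $k$ exactly as in Theorem~\ref{theo:main_mult_mg}: the base case is $D_1 = R_1 = R_1^{T}$ by condition~4, and the inductive step is the computation of $E_k^{*}$ through~(\ref{adjointofM}), using conditions 1, 2, 3 to conclude $E_k^{*} = E_k$ whenever $D_{k-1} = D_{k-1}^{T}$, hence $D_k = D_k^{T}$ by Lemma~\ref{lemma:TWO}. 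With $\bar B_1 = B_1^{T}$ and $B_0 = B_0^{T}$ in hand, the $E^{*} = E$ computation from the proof of Lemma~\ref{lemma:main_mult} applies, and Lemma~\ref{lemma:TWO} gives that $B$ is self-adjoint.

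Next I would prove positivity using only condition~5. Set $\delta = \| I - BA \|_A < 1$. For every $u \in \calg{H}$, Cauchy--Schwarz in the $A$-inner-product gives $|((I-BA)u,u)_A| \le \| I-BA \|_A\,(u,u)_A = \delta\,(u,u)_A$, so
$$
(BAu,u)_A = (u,u)_A - ((I-BA)u,u)_A \ge (1-\delta)\,(u,u)_A .
$$
Because $A$ is symmetric, $(BAu,u)_A = (ABAu,u) = (BAu,Au)$; writing $w = Au$, which ranges over all of $\calg{H}$ as $u$ does since $A$ is nonsingular, this becomes $(Bw,w) \ge (1-\delta)\,(u,u)_A = (1-\delta)\,(w,A^{-1}w)$. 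As $A^{-1}$ is SPD, $(w,A^{-1}w) > 0$ for $w \ne 0$, hence $(Bw,w) > 0$ for all $w \ne 0$, i.e. $B$ is positive definite. (Alternatively, once $B$ is known self-adjoint one may combine Lemma~\ref{lemma:TWO}, Lemma~\ref{lemma:Anorm-eq-rho} and~(\ref{RQ}) to see directly that $\sigma(BA) \subset (0,2)$.)

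I do not expect a real obstacle here: the only care needed is to phrase the symmetry induction cleanly --- and it is literally the induction already carried out in Theorem~\ref{theo:main_mult_mg} --- and to emphasize what has changed, namely that no non-negativity hypothesis on $R_1$ or on the intermediate smoothers $R_k$ is needed, since positivity of $B$ is now read off from the global $A$-norm convergence estimate. This is exactly what allows the theorem to cover hierarchical basis multigrid and multigrid with local smoothing on refined regions, where condition~5 of Theorem~\ref{theo:main_mult_mg} may fail while $\| I - BA \|_A < 1$ still holds.
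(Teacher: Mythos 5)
Your proposal is correct and matches the paper's (very terse) proof: the paper likewise obtains symmetry by reusing the induction from Theorem~\ref{theo:main_mult_mg} (which indeed only uses conditions 1--4) and derives positivity from condition~5 alone, the only cosmetic difference being that the paper phrases the positivity step as a contradiction argument while you give the equivalent direct estimate $(BAu,u)_A \ge (1-\delta)(u,u)_A$ via Cauchy--Schwarz in the $A$-inner-product. Your closing remarks about why the non-negativity hypotheses on $R_1$ and the intermediate smoothers can be dropped also agree with the paper's discussion surrounding the theorem.
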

\begin{proof}
Positivity of $B$ is proven easily by a contradiction argument.
Symmetry follows from the proof of Theorem~\ref{theo:main_mult_mg}.
\end{proof}

Requiring convergence of the underlying multigrid method is very 
restrictive; it is not a necessary condition.
Positivity of $B$ is satisfied if $\lambda_i(I-BA) <1$; 
no limit needs to be set on the magnitude of the negative eigenvalues.
The above eigenvalue condition, however, does not seem to lead to conditions 
that are easily checked in practice.

\begin{remark}
If variational conditions are satisfied on all levels, then there is a simple 
proof which shows that in addition to defining an SPD operator $B$, the 
conditions of Theorem~\ref{theo:main_mult_mg} are sufficient to prove the 
convergence of the MG method itself.
The result is as follows.
\begin{theorem}
   \label{theo:simple_multilevel}
If in addition to the conditions for Theorem~\ref{theo:main_mult_mg}, 
it holds that $A_k = I^k A I_k$, $I^k =  I_k^T$, and $R_1 = A_1^{-1}$,
then the MG error propagator satisfies:
$$
\rho(E) \le \| E \|_A < 1. 
$$
\end{theorem}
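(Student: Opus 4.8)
The equality $\rho(E) = \|E\|_A$ in the statement is already available: under the hypotheses of Theorem~\ref{theo:main_mult_mg} the operator $B$ is SPD, so $E = I-BA$ is $A$-self-adjoint by Lemma~\ref{lemma:TWO}, and then $\rho(E) = \|E\|_A$ by Lemma~\ref{lemma:a-s-a}. Hence the whole content is the strict bound $\|E\|_A < 1$, i.e. convergence of the V-cycle. The plan is to work with the level-$k$ error propagators $E_k = I - D_k A_k$ defined by \eqref{errorpropMG2} and \eqref{errorpropMG3} and to prove, by induction on $k = 1,\dots,J$, that each $E_k$ is $A_k$-self-adjoint and $A_k$-non-negative with $\|E_k\|_{A_k} \le 1$, the last inequality becoming strict at $k=J$, where $E_J = E$ and $A_J = A$.

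For the base case, since $R_1 = A_1^{-1}$ and (applying the added variational hypothesis at the coarsest level, together with $\mathrm{null}(I_1)=\{0\}$) $A_1 = I_1^T A I_1$ is SPD, we get $E_1 = I - R_1 A_1 = 0$, which trivially satisfies all three claims. For the inductive step, with $k \ge 2$ put $S_k = I - R_k A_k$ and $\bar S_k = I - \bar R_k A_k$. Conditions~1 and 3 of Theorem~\ref{theo:main_mult_mg} ($A_k = A_k^T$, $\bar R_k = R_k^T$) together with \eqref{adjointofM} give $\bar S_k = A_k^{-1} S_k^T A_k = S_k^*$, so \eqref{errorpropMG2} factors as $E_k = S_k^* C_k S_k$ with $C_k = I - I_{k-1}^k D_{k-1} I_k^{k-1} A_k$ the coarse-grid correction. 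Using $I_k^{k-1} = (I_{k-1}^k)^T$ from \eqref{eqn:variational2} and the inductive symmetry of $D_{k-1}$ (equivalent, by Lemma~\ref{lemma:TWO} applied on $\calg{H}_{k-1}$, to $A_{k-1}$-self-adjointness of $E_{k-1}$), a direct computation of $(A_k C_k u, v)$ shows $C_k$ is $A_k$-self-adjoint, and therefore so is $E_k = S_k^* C_k S_k$.

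The heart of the argument is the two-sided estimate $0 \le (C_k u,u)_{A_k} \le (u,u)_{A_k}$. Writing $w = I_k^{k-1} A_k u$ and $\eta = A_{k-1}^{-1} w$, the identity $D_{k-1} A_{k-1} = I - E_{k-1}$ yields $(D_{k-1} w, w) = ((I-E_{k-1})\eta,\eta)_{A_{k-1}}$, hence $(C_k u,u)_{A_k} = (u,u)_{A_k} - (\eta,\eta)_{A_{k-1}} + (E_{k-1}\eta,\eta)_{A_{k-1}}$. The inductive bounds $0 \le (E_{k-1}\eta,\eta)_{A_{k-1}} \le (\eta,\eta)_{A_{k-1}}$ give $0 \le (D_{k-1}w,w) \le (\eta,\eta)_{A_{k-1}}$; and $(\eta,\eta)_{A_{k-1}} = (w, A_{k-1}^{-1} w) = (A_k P_k u, u)$ with $P_k = I_{k-1}^k A_{k-1}^{-1} I_k^{k-1} A_k$, which by the variational conditions \eqref{eqn:variational2} (here one uses $I_k^{k-1} A_k I_{k-1}^k = A_{k-1}$) is the $A_k$-orthogonal projection onto the range of $I_{k-1}^k$, so $0 \le (A_k P_k u,u) \le (u,u)_{A_k}$. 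Combining these gives the claimed estimate for $C_k$, and therefore $(E_k u,u)_{A_k} = (C_k S_k u, S_k u)_{A_k}$ lies in $[\,0,\ \|S_k\|_{A_k}^2 (u,u)_{A_k}\,]$; thus $E_k$ is $A_k$-non-negative with $\|E_k\|_{A_k} \le \|S_k\|_{A_k}^2$. Condition~6 of Theorem~\ref{theo:main_mult_mg} ($\|S_k\|_{A_k} \le 1$) closes the induction for $2 \le k \le J-1$, and condition~5 ($\|S_J\|_A < 1$) gives $\|E\|_A = \|E_J\|_{A_J} \le \|S_J\|_A^2 < 1$. Finally $\rho(E) = \|E\|_A$ as noted, completing the proof.

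The step I expect to require the most care is the verification that $P_k$ is genuinely an $A_k$-orthogonal projection ($P_k^2 = P_k$ and $P_k = P_k^*$), since this is exactly where the full variational conditions \eqref{eqn:variational2} on \emph{every} level enter — the weaker ``$A_k$ SPD'' assumption of Theorem~\ref{theo:main_mult_mg} would not suffice. The other subtlety is purely organizational: the two inductive claims ($A_k$-self-adjointness and the two-sided spectral bound) must be carried together because each feeds the other, and one must keep in mind that the intermediate-level bound is only $\le 1$, so the strict inequality at the end is produced solely by the fine-grid smoother via condition~5.
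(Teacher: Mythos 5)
Your proof is correct, but it takes a genuinely different route from the paper's. The paper does not induct on the recursion at all: it invokes the known explicit product form of the V-cycle error propagator under full variational conditions (citing the references for that unrolling), observes that the exact coarsest solve makes the middle factor an $A$-orthogonal projector so that it can be duplicated, writes $E = \bar{E}^* \bar{E}$ with $\bar{E}$ the ``half-sweep'' product, concludes $A$-non-negativity of $E$ from $(AEv,v) = (A\bar{E}v,\bar{E}v) \ge 0$, and then gets the strict bound $\|E\|_A < 1$ from Lemma~\ref{lemma:rho3} combined with the positivity of $B$ already guaranteed by Lemma~\ref{lemma:main_mult}. You instead stay with the recursive definition \eqref{errorpropMG2}--\eqref{errorpropMG3} and carry a two-sided spectral bound $0 \le (E_k u,u)_{A_k} \le \|I - R_k A_k\|_{A_k}^2 (u,u)_{A_k}$ up the levels, using the $A_k$-orthogonality of the Galerkin coarse-grid projection $P_k$ at each step and extracting strictness from condition~5 alone. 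What each approach buys: the paper's argument is shorter but leans on the nontrivial product-form identity imported from the literature and on the auxiliary Lemma~\ref{lemma:rho3}; yours is self-contained, makes explicit exactly where the variational conditions \eqref{eqn:variational2} and the exact coarsest solve are used, and delivers the sharper quantitative conclusion $\|E\|_A \le \|I - R_J A\|_A^2$ rather than mere convergence. Both are sound; your observation that the equality $\rho(E) = \|E\|_A$ is free from Lemmas~\ref{lemma:TWO} and~\ref{lemma:a-s-a} matches the paper's implicit use of the same facts.
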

\begin{proof}
Under the conditions of the theorem, the MG error propagator can be 
written explicitly as the product (\cite{BPWX91b,McRu83}):
$$
E = (I - I_J R_J^T I_J^T A)   \cdots (I - I_1 R_1 I_1^T A)
                              \cdots (I - I_J R_J I_J^T A)~.
$$
Since the coarse problem is solved exactly, and since variational conditions 
hold, the coarse product term is an $A$-orthogonal projector:
$$
I - I_1 R_1 I_1^T A = I - I_1 (I_1 A I_1^T)^{-1} I_1^T A
= (I - I_1 (I_1 A I_1^T)^{-1} I_1^T A)^2 = (I - I_1 R_1 I_1^T A)^2.
$$
Therefore, we may define 
$\bar{E} = (I - I_1 R_1 I_1^T A)\cdots (I-I_J R_J I_J^T A)$,
and represent $E$ as the product $E = \bar{E}^* \bar{E}$.
Now, since $A$ is SPD, we have that:
$$
(A E v,v) = (A \bar{E} v, \bar{E} v) \ge 0~.
$$
Hence, $E$ is $A$-non-negative.
Under the conditions of the theorem, Lemma~\ref{lemma:main_mult} implies that 
the preconditioner is SPD, and so by Lemma~\ref{lemma:rho3} it holds
that $ \|E \|_A < 1$.
\end{proof}
\end{remark}
\section{Additive Schwarz methods}

We now present an analysis of additive Schwarz methods.
We establish sufficient conditions for additive algorithms to yield 
SPD preconditioners.
This theory is then employed to establish sufficient SPD conditions for 
additive DD and MG methods.

\subsection{A sum operator}

Consider a sum operator of the following form:
\begin{equation}\label{sum_operator}
E = I - BA = I - \omega (B_0 + B_1) A, ~~~~ \omega > 0~,
\end{equation}
where, as before, $A$ is an SPD operator, and $B_0$ and $B_1$ are 
linear operators on $\calg{H}$.
\begin{lemma}
     \label{lemma:main_add}
Sufficient conditions for symmetry and positivity of $B$, defined in
(\ref{sum_operator}), are 
\begin{enumerate}
\item $B_1$ is SPD in $\calg{H}$~;
\item $B_0$ is symmetric and non-negative on $\calg{H}$~.
\end{enumerate}
\end{lemma}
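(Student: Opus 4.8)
The plan is to read off the preconditioner explicitly and then verify the two required properties directly from the hypotheses; both are one-line computations, so this is really a matter of bookkeeping rather than a substantive argument. From~(\ref{sum_operator}) the operator $B$ implicitly defined by $E = I - BA$ is simply $B = \omega(B_0 + B_1)$, since $A$ is nonsingular.

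First I would establish symmetry. Using linearity of the adjoint together with condition~2 ($B_0 = B_0^T$) and condition~1 (which in particular gives $B_1 = B_1^T$), one computes
$$
B^T = \omega (B_0 + B_1)^T = \omega (B_0^T + B_1^T) = \omega (B_0 + B_1) = B,
$$
so $B$ is self-adjoint.

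Next I would establish positivity. For arbitrary $u \in \calg{H}$ with $u \ne 0$, split the quadratic form as
$$
(Bu,u) = \omega \big( (B_0 u, u) + (B_1 u, u) \big).
$$
Condition~2 gives $(B_0 u, u) \ge 0$, and condition~1 gives $(B_1 u, u) > 0$ for $u \ne 0$; since $\omega > 0$, the sum is strictly positive, so $(Bu,u) > 0$ for all $u \ne 0$, i.e.\ $B$ is positive definite.

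There is no real obstacle here: the only point requiring any care is that the strict inequality must be sourced from $B_1$ (which is assumed SPD, hence strictly positive) rather than from $B_0$ (which is only assumed non-negative), and that the positive scalar $\omega$ does not affect either conclusion. Consequently $B$ is SPD, and in particular it may be accelerated by the conjugate gradient method as in~\S\ref{subsec:linear_operators}.
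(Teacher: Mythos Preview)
Your proof is correct and follows exactly the same approach as the paper: identify $B = \omega(B_0 + B_1)$, obtain symmetry from the symmetry of $B_0$ and $B_1$, and obtain positivity from $(B_0u,u)\ge 0$, $(B_1u,u)>0$, and $\omega>0$. The paper's version is simply more terse.
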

\begin{proof}
We have that $B = \omega (B_0 + B_1)$, which is symmetric by
the symmetry of $B_0$ and $B_1$.
Positivity follows since $(B_0 u,u) \ge 0$ and $(B_1u,u)>0$, 
$\forall u \in \calg{H}$, $u\ne 0$.
\end{proof}

\begin{remark} \label{remark:alpha2}
The parameter $\omega$ is usually required to make the additive method
a convergent one.  Its estimation is often nontrivial, and can be 
very costly.
As was noted in Remark~\ref{remark:alpha}, the parameter
$\omega$ is not required when the linear additive method is used
as a preconditioner in a conjugate gradients algorithm.
This is exactly why additive multigrid and domain decomposition methods
are used almost exclusively as preconditioners.
\end{remark}

\subsection{Additive domain decomposition}

As in \S\ref{sect_mdd}, we consider the Hilbert space $\calg{H}$, and
$J$ subspaces $I_k \calg{H}_k$ such that 
$I_k \calg{H}_k \subseteq \calg{H} = \sum_{k=1}^J I_k \calg{H}_k$.
Again, we allow for the existence of a ``coarse'' subspace 
$I_0\calg{H}_0 \subseteq \calg{H}$.

The error propagator of an additive DD method on the space 
$\calg{H}$ employing the subspaces $I_k \calg{H}_k$ has the general
form~(see \cite{Xu92a}):
\begin{equation}\label{add_propagator}
E = I - BA 
  = I - \omega (I_0 R_0 I^0 + I_1 R_1 I^1 + \cdots + I_J R_J I^J) A.
\end{equation}
The operators $R_k$ are linear operators on $\calg{H}_k$,
constructed in such a way that $R_k\approx A_k^{-1}$, where the $A_k$ are the
subdomain problem operators.
Propagator (\ref{add_propagator}) can be thought of as the sum method 
(\ref{sum_operator}), by taking
$$
B_0 = I_0 R_0 I^0,
\ \ \ \ \ B_1 = \sum_{k=1}^J I_k R_k I^k.
$$

This identification allows for the use of Lemma~\ref{lemma:main_add} in order
to establish conditions to guarantee that additive domain 
decomposition yields an SPD preconditioner.
Before we state the main theorem, we need the following lemma,
which characterizes the splitting of $\calg{H}$ into the 
subspaces $I_k \calg{H}_k$ in terms of a positive 
{\it splitting constant} $S_0$.
\begin{lemma}
  \label{lemma:split_dd}
Given any ~$v \in \calg{H}$, there exists 
a splitting $v=\sum_{k=1}^J I_k v_k$, $v_k \in \calg{H}_k$, and a constant 
$S_0 >0$, such that
\begin{equation}\label{S0_bound}
\sum_{k=1}^J \| I_k v_k \|_A^2 \le S_0 \| v \|_A^2.
\end{equation}
\end{lemma}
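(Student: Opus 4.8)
The plan is to exploit finite-dimensionality: the admissible splittings of a given $v$ form a coset of a fixed subspace of a product space, and selecting the splitting of minimal size yields a bounded linear right inverse of the "sum" map, whose operator norm supplies the constant $S_0$.

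Concretely, I would introduce the product space $\calg{H}_P = \calg{H}_1 \times \cdots \times \calg{H}_J$ equipped with the bilinear form $\langle (u_k),(v_k)\rangle_P = \sum_{k=1}^J (I_k u_k, I_k v_k)_A = \sum_{k=1}^J (A I_k u_k, I_k v_k)$, whose associated quadratic form is $\|(v_k)\|_P^2 = \sum_{k=1}^J \|I_k v_k\|_A^2$. This is a genuine inner product: each $I_k^T A I_k$ is self-adjoint, and it is positive definite because $A$ is SPD and $\mbox{null}(I_k)=\{0\}$, so $\|(v_k)\|_P = 0$ forces $I_k v_k = 0$ and hence $v_k = 0$ for every $k$. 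Define the linear map $\Pi : \calg{H}_P \to \calg{H}$ by $\Pi(v_1,\ldots,v_J) = \sum_{k=1}^J I_k v_k$; the hypothesis $\calg{H} = \sum_{k=1}^J I_k \calg{H}_k$ says precisely that $\Pi$ is surjective. Split $\calg{H}_P = \calg{N} \oplus \calg{M}$ with $\calg{N} = \mbox{null}(\Pi)$ and $\calg{M} = \calg{N}^\perp$ (orthogonal complement for $\langle\cdot,\cdot\rangle_P$). Then $\Pi|_{\calg{M}} : \calg{M} \to \calg{H}$ is a linear bijection between finite-dimensional spaces, so its inverse $G = (\Pi|_{\calg{M}})^{-1} : \calg{H} \to \calg{M} \subseteq \calg{H}_P$ is bounded: there is $C > 0$ with $\|Gv\|_P \le C\,\|v\|_A$ for all $v$. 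Given $v \in \calg{H}$, set $(v_1,\ldots,v_J) = Gv$; then $v = \Pi(Gv) = \sum_{k=1}^J I_k v_k$ and
$$
\sum_{k=1}^J \|I_k v_k\|_A^2 = \|Gv\|_P^2 \le C^2 \|v\|_A^2,
$$
so the claim holds with $S_0 = C^2 > 0$.

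The argument is essentially soft; the only points needing care are verifying that $\langle\cdot,\cdot\rangle_P$ is positive definite — this is exactly where $\mbox{null}(I_k)=\{0\}$ is used — and invoking boundedness of the inverse of a linear bijection between finite-dimensional spaces (equivalently, equivalence of norms). An alternative route, which amounts to the same computation, is to set $S(v) = \inf\bigl\{\sum_k \|I_k v_k\|_A^2 : v = \sum_k I_k v_k\bigr\}$, note the infimum is attained at $Gv$ (the admissible set is the nonempty closed affine subspace $Gv + \calg{N}$ and $Gv \perp_P \calg{N}$), so $S(v) = \|Gv\|_P^2$ is a continuous quadratic form, and take $S_0 = \max_{\|v\|_A = 1} S(v)$, finite by compactness of the $A$-unit sphere. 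I would present the first version, since it additionally exhibits the splitting as a \emph{linear} function of $v$, which is convenient for the additive Schwarz estimates that follow.
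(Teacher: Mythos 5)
Your proof is correct, but it reaches the splitting by a different construction than the paper. The paper's proof first thins the subspaces: it selects $\calg{V}_k\subseteq\calg{H}_k$ so that the images $I_k\calg{V}_k$ still sum to $\calg{H}$ but now form a direct sum, so every $v$ has a \emph{unique} decomposition $v=\sum_k I_k v_k$ with $v_k\in\calg{V}_k$; the maps $Q_k v=I_k v_k$ are then linear projectors, and $S_0=\sum_{k=1}^J\|Q_k\|_A^2$ by boundedness of each $Q_k$ in finite dimensions. You instead keep all of $\calg{H}_1\times\cdots\times\calg{H}_J$, make it a Hilbert space under $\sum_k\|I_kv_k\|_A^2$, and invert the surjective sum map $\Pi$ on the orthogonal complement of its kernel, i.e.\ you take the minimal-norm splitting. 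Both arguments are soft finite-dimensional boundedness arguments built around a linear right inverse of the sum map, so the underlying mechanism is the same; the differences are worth noting, though. Your route uses the hypothesis $\mbox{null}(I_k)=\{0\}$ explicitly (to make the product form definite), produces the sharpest possible constant $S_0=\|G\|^2$ rather than the generally larger $\sum_k\|Q_k\|_A^2$, and sidesteps a slightly delicate point in the paper's version: pairwise trivial intersections $I_k\calg{V}_k\cap I_l\calg{V}_l=\{0\}$ do not by themselves imply uniqueness of the decomposition when $J>2$, so the paper's construction of the $\calg{V}_k$ has to be read as producing a genuine direct sum (which can be done, but is asserted rather than argued). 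The paper's version, on the other hand, is shorter and makes the splitting operators into honest projectors onto subspaces of $\calg{H}$, which is the form in which such stability constants usually appear in the Schwarz literature. Either proof serves the purpose of Theorem~\ref{theo:main_add_dd}, which only needs existence of some finite $S_0$.
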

\begin{proof}
Since $ \sum_{k=1}^J I_k \calg{H}_k = \calg{H}$,
we can construct subspaces $\calg{V}_k\subseteq \calg{H}_k$, such that
$$
I_k\calg{V}_k \cap I_l\calg{V}_l = \{ 0 \}~, ~\mbox{for}~k\neq l
~~\mbox{and}~~\calg{H} = \sum_{k=1}^J I_k \calg{V}_k~.
$$
Any $v \in \calg{H}$, can be decomposed uniquely as 
$v=\sum_{k=1}^J I_k v_k$, $v_k \in \calg{V}_k$. Define the projectors
$Q_k\in L(\calg{H},I_k\calg{V}_k)$ such that $ Q_k v = I_kv_k$.
Then,
$$
\sum_{k=1}^J \| I_k v_k \|_A^2
   = \sum_{k=1}^J \| Q_k v \|_A^2
   \leq  \sum_{k=1}^J ~\| Q_k\|_A^2 ~\| v \|_A^2~.
$$  
Hence,  the result follows with $S_0 = \sum_{k=1}^J \| Q_k\|_A^2$.
\end{proof}

\begin{theorem}
   \label{theo:main_add_dd}
Sufficient conditions for symmetry and positivity of the additive 
domain decomposition operator $B$, defined in (\ref{add_propagator}), are
\begin{enumerate}
\item $I^k = c_k I_k^T$, ~~$c_k > 0$, ~~$k=0,\dots,J$~;
\item $R_k$ is SPD on $\calg{H}_k~,~~k=1,\dots,J$~;
\item $R_0$ is symmetric and non-negative on $\calg{H}_0$~.
\end{enumerate}
\end{theorem}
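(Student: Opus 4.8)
The plan is to recognize the additive domain decomposition propagator~(\ref{add_propagator}) as an instance of the abstract sum operator~(\ref{sum_operator}) with the identification already indicated in the text, namely $B_0 = I_0 R_0 I^0$ and $B_1 = \sum_{k=1}^J I_k R_k I^k$, and then simply verify the two hypotheses of Lemma~\ref{lemma:main_add}: that $B_1$ is SPD on $\calg{H}$ and that $B_0$ is symmetric and non-negative on $\calg{H}$. Since $\omega>0$, it is irrelevant to symmetry and positivity, so the entire theorem reduces to these two facts about $B_0$ and $B_1$. First I would record the transpose identities coming from condition~1: with respect to the pairing~(\ref{eqn:adjointofN}), $I^k = c_k I_k^T$ gives $(I^k)^T = c_k I_k$ and $I_k^T = c_k^{-1} I^k$, for $k=0,\dots,J$.

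Next I would dispatch the symmetry and the $B_0$ part, which are routine and parallel the corresponding step in the proof of Theorem~\ref{theo:main_mult_dd}. For each $k$, $(I_k R_k I^k)^T = (I^k)^T R_k^T I_k^T = (c_k I_k) R_k^T (c_k^{-1} I^k) = I_k R_k^T I^k$, so conditions~2 and~3 make every summand of $B_1$ as well as $B_0$ self-adjoint; hence $B_1$ and $B_0$ are symmetric, and so is $B = \omega(B_0+B_1)$. For the sign, the same manipulation yields $(B_0 u,u) = (I_0 R_0 I^0 u, u) = (R_0 I^0 u, I_0^T u) = c_0^{-1}(R_0 I^0 u, I^0 u) \ge 0$ by condition~3 and $c_0>0$, so $B_0$ is non-negative; likewise $(B_1 u, u) = \sum_{k=1}^J c_k^{-1}(R_k I^k u, I^k u) \ge 0$, which already shows $B_1$ is non-negative.

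The one substantive point, and the step I expect to be the crux, is upgrading this last inequality to strict positivity of $B_1$. Since each $R_k$ is SPD (condition~2) and $c_k>0$, the displayed sum vanishes only if $I^k u = 0$ for every $k=1,\dots,J$; by condition~1 this is $I_k^T u = 0$, i.e.\ $u$ is orthogonal in $\calg{H}$ to each subspace $I_k\calg{H}_k$. Because the subspaces span, $\calg{H} = \sum_{k=1}^J I_k\calg{H}_k$, this forces $u \perp \calg{H}$ and hence $u=0$, so $B_1$ is SPD; alternatively one may run this through Lemma~\ref{lemma:split_dd} by taking a splitting $u = \sum_k I_k u_k$ of a putative nonzero null vector and noting that $(B_1 u,u)$ dominates the corresponding positive combination. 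The only place care is needed is keeping the inner products straight, since those on the $\calg{H}_k$ are unrelated to that on $\calg{H}$ and condition~1 is exactly the bridge that makes the orthogonality argument go through. With $B_1$ SPD and $B_0$ symmetric and non-negative, Lemma~\ref{lemma:main_add} then delivers that $B$ is symmetric and positive definite, which completes the proof.
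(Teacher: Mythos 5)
Your proof is correct, and the symmetry and $B_0$ parts coincide with the paper's. Where you genuinely diverge is the crux step, positivity of $B_1$. The paper proves this quantitatively: it introduces the auxiliary operators $A_k = I^k A I_k$, uses that $R_k A_k$ is $A_k$-SPD to get a uniform eigenvalue bound $\omega_0$, invokes the splitting Lemma~\ref{lemma:split_dd} to control $\sum_k c_k^{-1}(R_k^{-1}v_k,v_k)$ by $(S_0/\omega_0)\|v\|_A^2$, and then runs a Cauchy--Schwarz argument in the $R_k$-inner-products to conclude $(B_1 Av,Av) \ge (\omega_0/S_0)\|v\|_A^2$. You instead argue qualitatively: the non-negative sum $\sum_k c_k^{-1}(R_k I^k u, I^k u)$ vanishes only if every $I^k u = 0$, which by condition~1 and the definition~(\ref{eqn:adjointofN}) of the adjoint forces $u \perp I_k\calg{H}_k$ for all $k$, hence $u \perp \calg{H}$ and $u=0$. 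This is a valid and more elementary route — in finite dimensions strict positivity is all the theorem asks for, and your argument needs neither the splitting lemma nor the eigenvalue bound. What the paper's heavier machinery buys is an explicit lower bound $\omega_0/S_0$ on the $A$-Rayleigh quotient of $B_1A$, i.e.\ the standard ingredient for condition-number estimates of additive Schwarz preconditioners, which your kernel argument does not provide. One small caveat: your parenthetical alternative ("run this through Lemma~\ref{lemma:split_dd} by noting that $(B_1u,u)$ dominates the corresponding positive combination") is too loose to stand on its own — making that dominance precise is exactly the Cauchy--Schwarz step the paper carries out — but since your primary argument does not rely on it, the proof is complete as written.
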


\begin{proof}
Symmetry of $B_0$ and $B_1$ follow trivially from the symmetry of $R_k$ and 
$R_0$, and from $I^k=c_k I_k^T$.
That $B_0$ is non-negative on $\calg{H}$ follows immediately from the 
non-negativity of $R_0$ on $\calg{H}_0$.

Finally, we prove positivity of $B_1$.
Define (only as a technical tool) the operators $A_k = I^k A I_k$, 
$k=1,\ldots,J$.
By condition~1, and the full rank nature of $I_k$, 
we have that $A_k$ is SPD.
Now, since $R_k$ is also SPD, the product $R_k A_k$ is 
$A_k$-SPD.  Hence, there exists an $\omega_0 > 0$ such that
$0 < \omega_0 < \lambda_i(R_k A_k), ~k = 1, \ldots, J$.
This is used together with  (\ref{S0_bound}) to bound the following sum,
$$
\sum_{k=1}^J c_k^{-1} (R_k^{-1} v_k,v_k) 
   = \sum_{k=1}^J c_k^{-1} (A_k A_k^{-1} R_k^{-1} v_k,v_k)
$$
$$
\le \sum_{k=1}^J c_k^{-1} (A_k v_k, v_k)
      \max_{v_k \ne 0}\frac{(A_k A_k^{-1} R_k^{-1} v_k,v_k)}{(A_k v_k, v_k)}
\le \sum_{k=1}^J c_k^{-1} \omega_0^{-1} (A_k v_k, v_k) 
$$
$$
=  \sum_{k=1}^J \omega_0^{-1} ( A I_k v_k, I_k v_k)
=  \sum_{k=1}^J \omega_0^{-1} \| I_k v_k \|_A^2 
\le \left( \frac{S_0}{\omega_0} \right) \| v \|_A^2,
$$
with $v = \sum_{k=1}^J I_k v_k$.
We can now employ this result to establish positivity of $B_1$.
First note that
\begin{displaymath}
\| v \|_A^2 = (Av,v) = \sum_{k=1}^J (Av,I_k v_k)
  = \sum_{k=1}^J (I_k^T Av,v_k)
  = \sum_{k=1}^J (R_k c_k^{1/2} I_k^T Av, R_k^{-1} c_k^{-1/2} v_k) ~.
\end{displaymath}
By using the Cauchy-Schwarz inequality first in the $R_k$-inner-product
and then in $\bbbb{R}^J$, we have that
\begin{eqnarray*}
\| v \|_A^2  &\le& \left( \sum_{k=1}^J 
     (R_k R_k^{-1} c_k^{-1/2} v_k, R_k^{-1} c_k^{-1/2} v_k) \right)^{1/2}
      \left( \sum_{k=1}^J 
      (R_k c_k^{1/2} I_k^T A v, c_k^{1/2} I_k^T A v) \right)^{1/2} \\
&\le& \left( \frac{S_0}{\omega_0} \right)^{1/2}~ \| v \|_A~
      \left( \sum_{k=1}^J (I_k R_k c_k I_k^T A v, A v) \right)^{1/2} \\
&=&   \left( \frac{S_0}{\omega_0} \right)^{1/2} 
        \| v \|_A~(B_1 A v, A v)^{1/2}~. 
\end{eqnarray*}
Finally, division by $\| v \|_A$ and squaring yields
$$
(B_1 Av, Av)~\ge~\frac{\omega_0}{S_0}~\| v\|_A^2 > 0~,~~\forall 
v\in\calg{H} ~,~~ v \ne 0~.
$$
\end{proof}

\begin{remark} \label{remark:dd_add_galerkin}
Condition~1 is naturally satisfied for $k=1,\ldots,J$,
with $c_k = 1$, since the associated $I_k$ and $I^k$ are usually 
inclusion and orthogonal projection operators (which are natural adjoints
when the inner-products are inherited from the parent space, as in
domain decomposition).
The fact that $I^0= c_0 I_0^T$ needs to be satisfied explicitly.
Condition~2 requires the use of SPD subdomain solvers.  
The condition will hold, for example, when the subdomain solve is exact
and the subdomain problem solver is SPD.
(The latter is naturally satisfied by condition~1 and the full rank 
nature of $I_k$.)
Finally, condition~3 is nontrivial, and needs to be checked explicitly.
The condition holds when the coarse space problem operator is SPD and the 
solve is exact.
Note that variational conditions are not needed for the coarse space 
problem operator.
\end{remark}

Consider again the case when the subspaces together do not 
span the entire space, except when the coarse space is included.
The above theorem applies immediately with $R_0=0$, where now the coarse
space is taken to be any one of the spaces $\calg{H}_k$, $k\ne 0$.
The error propagator takes the form
\begin{equation}\label{add_propagator_v2}
I - BA 
  = I - \omega (I_1 R_1 I^1 + I_2 R_2 I^2 + \cdots + I_J R_J I^J) A.
\end{equation}
This leads to the following corollary.
\begin{corollary}
   \label{corr:main_add_dd_nonoverlap}
Sufficient conditions for symmetry and positivity of the additive 
domain decomposition operator $B$, defined in (\ref{add_propagator_v2}), are
\begin{enumerate}
\item $I^k = c_k I_k^T$, ~~$c_k > 0$, ~~$k=1,\dots,J$~;
\item $R_k$ is SPD on $\calg{H}_k~,~~k=1,\dots,J$~.
\end{enumerate}
\end{corollary}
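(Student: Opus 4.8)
The plan is to obtain this corollary as an immediate specialization of Theorem~\ref{theo:main_add_dd}. Observe that the propagator~(\ref{add_propagator_v2}) is exactly the propagator~(\ref{add_propagator}) with the coarse term deleted, i.e.\ with the choice $R_0 = 0$. In the identification with the sum operator~(\ref{sum_operator}) this means $B_0 = I_0 R_0 I^0 = 0$ and $B_1 = \sum_{k=1}^J I_k R_k I^k$, so that $B = \omega(B_0 + B_1) = \omega B_1$.

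First I would check that the hypotheses of Theorem~\ref{theo:main_add_dd} are in force. Condition~1 of the corollary is condition~1 of the theorem, and condition~2 of the corollary is condition~2 of the theorem. Condition~3 of the theorem --- that $R_0$ be symmetric and non-negative on $\calg{H}_0$ --- holds trivially for $R_0 = 0$. The one point that deserves attention is the standing assumption behind the theorem (and behind Lemma~\ref{lemma:split_dd}) that $\calg{H} = \sum_{k=1}^J I_k \calg{H}_k$: in the present setting this is guaranteed precisely because the former coarse space has been relabelled as one of the spaces $\calg{H}_k$, $k \in \{1,\dots,J\}$, and by hypothesis the subspaces become spanning once the coarse space is included. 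Thus Lemma~\ref{lemma:split_dd} still supplies a splitting constant $S_0 > 0$.

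With this in hand, Theorem~\ref{theo:main_add_dd} applies essentially verbatim: symmetry of $B = \omega B_1$ follows from $R_k = R_k^T$ together with $I^k = c_k I_k^T$ (each summand $I_k R_k I^k$ is then self-adjoint), and positivity of $B_1$ follows from the Cauchy--Schwarz argument in the proof of that theorem, using the SPD property of each $R_k$, the induced SPD operators $A_k = I^k A I_k$, and the splitting bound~(\ref{S0_bound}); one obtains $(B_1 A v, A v) \ge (\omega_0/S_0)\,\| v \|_A^2 > 0$ for $v \ne 0$, and hence $(Bu,u) > 0$ for all $u \ne 0$. Since there is no $B_0$ term to control, the argument is in fact strictly shorter than that of Theorem~\ref{theo:main_add_dd}. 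I do not expect a genuine obstacle here; the only thing to be careful about is the bookkeeping of the relabelling, so that the spanning hypothesis needed by Lemma~\ref{lemma:split_dd} is legitimately available in the new indexing, while everything else is a direct quotation of the already-proved theorem.
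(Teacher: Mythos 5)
Your proposal is correct and matches the paper's own (implicit) argument exactly: the paper disposes of this corollary by remarking that Theorem~\ref{theo:main_add_dd} ``applies immediately with $R_0=0$,'' with the coarse space relabelled as one of the $\calg{H}_k$, $k\ne 0$, which is precisely your specialization. Your additional care about the spanning hypothesis needed for Lemma~\ref{lemma:split_dd} under the relabelling is a sensible elaboration of the same route, not a different one.
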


\subsection{Additive multigrid}

As in \S\ref{sect_mmg}, given are the Hilbert space $\calg{H}$, and 
$J-1$  nested subspaces $I_k \calg{H}_k$ such that 
$ I_1 \calg{H}_1 \subseteq I_2 \calg{H}_2 
   \subseteq \cdots \subseteq I_{J-1} \calg{H}_{J-1} 
   \subseteq \calg{H}_J \equiv \calg{H}~$.
The operators $I_k$ and $I^k$ are the usual linear operators between the 
different spaces, as in the previous sections.

The error propagator of an additive MG method is defined explicitly:
\begin{equation}\label{propagator_mmg}
E = I - BA 
  = I - \omega (I_1 R_1 I^1 + I_2 R_2 I^2 + \cdots 
    +  I_{J-1} R_{J-1} I^{J-1} + R_J ) A.
\end{equation}
This can be thought of as the sum method analyzed earlier, by taking
$$
B_0 = \sum_{k=1}^{J-1} I_k R_k I^k~,~~B_1 = R_J ~.
$$

This identification allows for the use of Lemma~\ref{lemma:main_add} to 
establish sufficient conditions to guarantee that additive MG 
yields an SPD preconditioner.
\begin{theorem}
   \label{theo:main_add_mg}
Sufficient conditions for symmetry and positivity of the additive multigrid
operator $B$, defined in (\ref{propagator_mmg}), are:
\begin{enumerate}
\item $I^k = c_k I_k^T~, ~~c_k > 0~, ~~k=1,\ldots,J-1$~;
\item $R_J$ is SPD~ in $\calg{H}$~;
\item $R_k$ is symmetric non-negative in $\calg{H}_k~,~~k=1,\ldots,J-1$~.
\end{enumerate}
\end{theorem}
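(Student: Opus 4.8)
The plan is to recognize the additive multigrid propagator (\ref{propagator_mmg}) as an instance of the abstract sum operator (\ref{sum_operator}) and then verify the two hypotheses of Lemma~\ref{lemma:main_add}. Concretely, I would set $B_0 = \sum_{k=1}^{J-1} I_k R_k I^k$ and $B_1 = R_J$, so that $B = \omega(B_0+B_1)$ exactly as in (\ref{sum_operator}). The crucial structural point --- and the reason this proof is much shorter than that of Theorem~\ref{theo:main_add_dd} --- is that the fine-grid term $R_J$ already acts on all of $\calg{H}$, so it plays the role of $B_1$ directly; there is no need to split an arbitrary $v \in \calg{H}$ among the subspaces or to invoke a splitting constant and Cauchy--Schwarz. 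Positivity of $B_1$ is then nothing more than condition~2 of the theorem.

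It remains to check that $B_0$ is symmetric and non-negative on $\calg{H}$. For symmetry I would compute, using condition~1 in the form $(I^k)^T = c_k I_k$ (equivalently $(I_k)^T = c_k^{-1} I^k$) together with the symmetry of $R_k$ from condition~3,
\[
B_0^T = \sum_{k=1}^{J-1} (I^k)^T R_k^T (I_k)^T = \sum_{k=1}^{J-1} (c_k I_k) R_k (c_k^{-1} I^k) = \sum_{k=1}^{J-1} I_k R_k I^k = B_0.
\]
For non-negativity I would, for arbitrary $u \in \calg{H}$, write
\[
(B_0 u, u) = \sum_{k=1}^{J-1} (I_k R_k I^k u, u) = \sum_{k=1}^{J-1} (R_k I^k u, (I_k)^T u) = \sum_{k=1}^{J-1} c_k^{-1} (R_k I^k u, I^k u) \ge 0,
\]
where the last inequality uses $c_k > 0$ and the non-negativity of each $R_k$ on $\calg{H}_k$ from condition~3. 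With both hypotheses of Lemma~\ref{lemma:main_add} verified, that lemma immediately gives symmetry and positivity of $B$.

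I do not expect a genuine obstacle here: the argument is pure bookkeeping with the adjoint relations, and the only thing to be careful about is tracking the constants $c_k$ correctly when transposing $I_k R_k I^k$, the same point handled by the factor $c_k$ in Remark~\ref{remark:matrices} and in the proof of Theorem~\ref{theo:main_add_dd}. One could also remark afterward, as in Remark~\ref{remark:dd_add_galerkin}, that condition~1 is automatic with $c_k=1$ when prolongation and restriction are natural adjoints, that condition~2 holds for an exact SPD fine-grid smoother, and that condition~3 holds whenever each coarse-level smoother is non-divergent and symmetric; but these are commentary rather than part of the proof.
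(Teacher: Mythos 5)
Your proposal is correct and follows essentially the same route as the paper: identify $B_0 = \sum_{k=1}^{J-1} I_k R_k I^k$ and $B_1 = R_J$, then verify the two hypotheses of Lemma~\ref{lemma:main_add}, with non-negativity of $B_0$ obtained by moving $I_k$ across the inner product via condition~1 (your factor $c_k^{-1}$ and the paper's factor $c_k$ are the same computation written with the substitution made in opposite directions). The only difference is that you spell out the symmetry of $B_0$, which the paper dismisses as obvious.
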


\begin{proof}
Symmetry of $B_0$ and $B_1$ is obvious.
$B_1$ is positive by condition~2.
Non-negativity of $B_0$ follows from
$$ 
(B_0 u, u) = \sum_{k=1}^{J-1} (I_k R_k (c_k I_k)^T u, u)
           = \sum_{k=1}^{J-1} c_k (R_k I_k^T u, I_k^T u)
           \ge 0, \ \ \ \forall u \in \calg{H}, u \ne 0.
$$ 
\end{proof}

\begin{remark} \label{remark:mg_add_galerkin}
Condition~1 of the theorem has to be imposed explicitly.
Conditions~2 and~3 require the smoothers to be symmetric.
The positivity of $R_J$ is satisfied when the fine grid smoother is 
convergent, although this is not a necessary condition.
The non-negativity of $R_k$, $k<J$, has to be checked explicitly.
When the coarse problem operators are SPD, this condition is
satisfied, for example, when the smoothers are non-divergent.
Note that variational conditions for the subspace problem operators 
are not required.
\end{remark}

Theorem~\ref{theo:main_add_mg} is applicable to the standard 
multigrid case, i.e., the case where the fine mesh smoother
operates on the entire fine mesh. 
As in \S3.3, a different set of conditions is to be derived to 
cover the cases of (additive) hierarchical basis 
preconditioners~\cite{Yser86b}, and additive multilevel methods with smoothing 
in local refinement regions only.
The latter cases are treated most easily by loosening the restriction 
of nestedness of the spaces $I_k \calg{H}_k$.
With $\calg{H}_k$ interpreted as the domain space of the smoother $R_k$,
the theory becomes identical to that of the additive domain 
decomposition case.
Sufficient conditions for the additive operator $B$ to be SPD are
then similar to the conditions of the 
additive domain decomposition method in 
Corollary~\ref{corr:main_add_dd_nonoverlap}

\section{To symmetrize or not to symmetrize}

\label{sec:symmetry}

The following lemma illustrates why symmetrizing is a bad idea for linear 
methods. 
It exposes the convergence rate penalty incurred by symmetrization of a 
linear method.

\begin{lemma}
   \label{lemma:penalty}
For any $E \in \bold{L}(\calg{H},\calg{H})$, it holds that:
$$
\rho(EE) \le \|E E\|_A \le \|E\|_A^2 
           = \| E E^* \|_A = \rho(E E^*).
$$
\end{lemma}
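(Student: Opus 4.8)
The plan is to read the displayed chain from left to right, identifying each link as either an elementary property of induced operator norms or a direct appeal to a lemma already established in \S2. Throughout we are in the setting where $A$ is SPD, so that $(\cdot,\cdot)_A$, the $A$-norm, and the $A$-adjoint all make sense.

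\textbf{The two left inequalities.} The bound $\rho(EE)\le\|EE\|_A$ is the general estimate of the spectral radius by any induced operator norm, here the norm $\|\cdot\|_A$ associated with the inner-product $(\cdot,\cdot)_A$; and $\|EE\|_A\le\|E\|_A^2$ is submultiplicativity of that same induced norm. Both hold for an arbitrary $E\in\bold{L}(\calg{H},\calg{H})$ with no self-adjointness hypothesis. (One may note in passing that $\rho(EE)=\rho(E)^2$, which is what makes the gap between the two ends of the chain meaningful.)

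\textbf{The central equality.} The identity $\|E\|_A^2=\|EE^*\|_A$ is precisely the statement of Lemma~\ref{lemma:Anorm_Esquare_EEstar}, so I would simply cite it. \textbf{The right equality.} The one substantive observation is that $EE^*$ is $A$-self-adjoint: since the $A$-adjoint is an involution, $(E^*)^*=E$, and since adjunction reverses products, $(EE^*)^*=(E^*)^*E^*=EE^*$. With $A$ SPD, Lemma~\ref{lemma:a-s-a} then yields $\|EE^*\|_A=\rho(EE^*)$, which closes the chain.

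The only computation required is the one-line check in the last step that $EE^*$ is $A$-self-adjoint; everything else is a quotation of \S2, so there is no genuine obstacle. The content of the lemma is conceptual rather than technical: the symmetrized iteration with error propagator $EE^*$ has contraction number exactly $\|E\|_A^2$, which can vastly exceed $\rho(E)^2$ --- and even $\rho(E)$ --- when $E$ is far from $A$-normal, which is the symmetrization penalty the lemma is meant to expose.
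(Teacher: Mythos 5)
Your proof is correct and follows exactly the same route as the paper: the two inequalities are generic norm facts, the first equality is Lemma~\ref{lemma:Anorm_Esquare_EEstar}, and the last is Lemma~\ref{lemma:a-s-a}. The only difference is that you explicitly verify that $EE^*$ is $A$-self-adjoint before invoking Lemma~\ref{lemma:a-s-a}, a detail the paper leaves implicit.
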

\begin{proof}
The first and second inequalities hold for any norm.
The first equality follows from Lemma~\ref{lemma:Anorm_Esquare_EEstar}, 
and the second follows from Lemma~\ref{lemma:a-s-a}.
\end{proof}

Note that this is an inequality not only for the spectral radii, which is 
only an asymptotical measure of convergence, but also for the $A$-norms of the 
nonsymmetric and symmetrized error propagators. 
The lemma illustrates that one may actually see the differing convergence 
rates early in the iteration as well.

Based on this lemma, and Corollary~\ref{coro:cond2}, we conjecture that when 
symmetrization of a linear method is required for its use as a 
preconditioner, the best results will be obtained by enforcing only a minimal 
amount of symmetry.
This conjecture can be made more clear by considering the two linear methods
with error propagators $E_1 = E E$ and $E_2 = E E^*$, and the effectiveness as 
preconditioners of the symmetrized product operators 
$E_1 E_1^*$ and $E_2 E_2^*$.
Using Lemma~\ref{lemma:a-s-a} and Lemma~\ref{lemma:penalty},
we find immediately that
\begin{equation}
   \label{eqn:joe}
\| E_1 E_1^* \|_A
  = \| E_1 \|_A^2
  \le \| E_2 \|_A^2
  = \| E_2 E_2^* \|_A.
\end{equation}
While both operators are symmetric, the operator $E_1 E_1^*$ has been 
symmetrized ``minimally'' in the sense that the individual terms $E_1$ making 
up the product are themselves nonsymmetric.
On the other hand, both of the terms $E_2$ making up the operator $E_2 E_2^*$ 
are completely symmetric.  

As a result of inequality~(\ref{eqn:joe}) and Corollary~\ref{coro:cond2}, 
the bound for the condition number of the preconditioner associated with 
$E_1 E_1^*$ is less than the corresponding bound for the preconditioner
associated with $E_2 E_2^*$.
Hence, the ``less-symmetric'' $E_1 E_1^*$ would likely produce a better 
preconditioner than the ``more-symmetric'' $E_2 E_2^*$.

\section{Numerical results} \label{sec:numerical}

We present numerical results obtained by using multiplicative and additive
finite-element-based DD and MG methods applied to two test problems,
and we illustrate the theory of the preceding sections.

\subsection{Example 1}

Violation of variational conditions can occur in DD and MG methods when, for 
example, complex coefficient discontinuities do not lie along element 
boundaries on coarse meshes.
An example of this occurs with the following test problem.
The Poisson-Boltzmann equation describes the electrostatic potential of a 
biomolecule lying in an ionic solvent (see, e.g.,~\cite{BrMc90} 
for an overview).
This nonlinear elliptic equation for the dimensionless electrostatic potential 
$u(\bold{r})$ has the form:
$$
- \nabla \cdot ( \epsilon(\bold{r}) \nabla u(\bold{r}) )
  + \bar{\kappa}^2 
      \sinh ( u(\bold{r}) )
  = \left( \frac{4 \pi e_c^2}{k_B T} \right)
     \sum_{i=1}^{N_m} z_i \delta(\bold{r} - \bold{r}_i),
  \ \ \bold{r} \in \bbbb{R}^3,
\ \ \ ~u(\infty) = 0~.
$$
The coefficients appearing in the equation are discontinuous by 
several orders of magnitude.
The placement and magnitude of atomic charges are represented by source 
terms involving delta-functions.
Analytical techniques are used to obtain boundary conditions on a finite
domain boundary.

We will compare several MG and DD methods for a two-dimensional, linearized 
Poisson-Boltzmann problem, modeling a molecule with three point charges.
The surface of the molecule is such that the discontinuities do not align 
with the coarsest mesh or with the subdomain boundaries.
Beginning with the coarse mesh shown on the left in 
Figure~\ref{fig:ex1_mg_refine}, we uniformly refine the initial mesh of 10 
elements (9 nodes) 
four times, leading to a fine mesh of 2560 elements (1329 nodes). 
Piecewise linear finite elements, combined with one-point
Gaussian quadrature, are used to discretize the problem.
The three coarsest meshes used to formulate the MG methods are
 given in Figure~\ref{fig:ex1_mg_refine}.
For the DD methods, the  subdomains, corresponding to the initial coarse
triangulation, are given a small overlap of one fine mesh triangle.
The DD methods also employ a coarse space constructed from the 
initial triangulation.
Figure~\ref{fig:ex1_dd_subdomains} shows three overlapping subdomains 
overlaying the initial coarse mesh.

Computed results are presented in Tables~\ref{table:ex1_mg_mult} to 
\ref{table:ex1_dd_add}.  Given for each experiment is the number of 
iterations required to satisfy the error criterion 
(reduction of the $A$-norm of the error by $10^{-10}$).
We report results for the unaccelerated, CG-accelerated, and 
Bi-CGstab-accelerated methods.
Since the cost of one iteration differs for each method, 
Table~\ref{tab:ex1:itcost2} gives the operation counts per iteration, 
normalized by the cost of a single multigrid iteration.
For the MG operation counts, two smoothing iterations by 
lexicographic Gauss-Seidel (one pre- and 
one post-smoothing) are used.
The DD operation counts are for methods employing two sweeps through the 
subdomains, each approximate subdomain solve consisting
of four sweeps of a Gauss-Seidel iteration.

Table~\ref{tab:ex1:itcost2} shows that multiplicative MG is slightly more 
costly than additive MG, since additive MG 
requires the computation of the residual only on the finest level.
Similarly, multiplicative DD is somewhat more costly than additive DD,
due to the need to update boundary information (recompute the residual)
after the solution of each subdomain problem.
Table~\ref{tab:ex1:itcost2} should not be used to compare MG 
and DD methods for efficiency.
Similar experiments~\cite{HoSa93b} with more carefully optimized DD and MG 
methods show DD to be often competitive with MG for difficult elliptic 
equations such as those with discontinuous coefficients, although there may 
be some debate as to which approach is more effective on parallel 
computers~\cite{Vand94}.

\begin{table}[t]
\caption{Normalized operation counts per iteration, Example 1.}
\label{tab:ex1:itcost2}
\begin{center}
\small
\begin{tabular}{|c||c|c|c|} \hline
Method & UNACCEL
       & ~~~~~CG~~~~~
       & Bi-CGstab \\ \hline\hline
multiplicative MG & 1.0      & 1.4      & 2.6      \\ \hline
additive MG       & .95      & 1.3      & 2.5      \\ \hline
multiplicative DD & 3.5      & 3.8      & 7.5      \\ \hline
additive DD       & 3.1      & 3.4      & 6.7      \\ \hline
\end{tabular}
\end{center}
\end{table}

\begin{figure}[t]
\begin{center}
\mbox{\myfigpdf{fig1}{1.0in}}
\end{center}
\caption{Example 1: Nested finite element meshes for MG.}
   \label{fig:ex1_mg_refine}

\bigskip
\begin{center}
\mbox{\myfigpdf{fig2}{1.0in}}
\end{center}
\caption{Example 1: Overlapping subdomains for DD.}
   \label{fig:ex1_dd_subdomains}
\end{figure}

\subsubsection*{Multiplicative multigrid}

The results for multiplicative V-cycle MG are presented in 
Table~\ref{table:ex1_mg_mult}.
Each row corresponds to a different smoothing strategy, and is annotated by
$(\nu_1,\nu_2)$, with $\nu_1$: pre-smoothing strategy, and 
$\nu_2$: post-smoothing strategy.
An ``f'' indicates the use of a single forward Gauss-Seidel sweep, while
a ``b'' denotes the use of the adjoint of the latter, i.e., a backward
Gauss-Seidel sweep.  $(\nu_1,\nu_2)$ = $(ff,fb)$, for example, 
corresponds to two forward Gauss-Seidel pre-smoothing steps,
and a symmetric (forward/backward) post-smoothing step.
Two series of results are given.  
For the first set, we explicitly imposed the Galerkin conditions when 
constructing the coarse operators.  
In this case, the multigrid algorithm is guaranteed to 
converge by Theorem~\ref{theo:simple_multilevel}.
In the second series of tests, corresponding to the numbers 
in parentheses, the coarse mesh operators are constructed using 
standard finite element discretization.   
In that case, Galerkin conditions are not satisfied everywhere due to
coefficient discontinuities appearing within coarse elements;
hence, the MG method may diverge (DIV). 

The unaccelerated MG results clearly illustrate the symmetry penalty 
discussed in \S\ref{sec:symmetry}.
The nonsymmetric methods are always superior to the symmetric ones
(the cases (f,b), (ff,bb), and (fb,fb)).  
Note that minimal symmetry (ff,bb) leads to a better convergence 
than maximal symmetry (fb,fb).
The correctness of Lemma~\ref{lemma:penalty} is illustrated by noting that two
iterations of the (f,0) strategy are actually faster than one
iteration of the (f,b) strategy; also, compare the (ff,0) strategy
to the (ff,bb) one.   
CG-acceleration leads to a guaranteed reduction in iteration count
for the symmetric preconditioners (see Lemma~\ref{theo:accel}).
We observe that the unaccelerated method need not be convergent 
for CG to be effective 
(recall Remarks~\ref{remark:alpha} and~\ref{remark:alpha2}, 
and the (f,b) result). 
CG appears to accelerate also some non-symmetric linear methods.
Yet, it seems difficult to predict failure or success beforehand in such cases.
The most robust method appears to be the  Bi-CGstab method.  
The number of iterations with this method depends only marginally on 
the symmetric or nonsymmetric nature of the linear method. 
Note the tendency to favor the nonsymmetric V-cycle strategies.
Overall, the fastest method proves to be the Bi-CGstab-acceleration of 
a (very nonsymmetric) V(1,0)-cycle.

\subsubsection*{Multiplicative domain decomposition}

Some numerical results for multiplicative DD with different subdomain solvers, 
and different subdomain sweeps are given in Table~\ref{table:ex1_dd_mult}.
In the column ``forw", the iteration counts reported were obtained with a 
single sweep though the subdomains on each multiplicative DD iteration.
The other columns correspond
to a symmetric forward/backward sweep or to two forward sweeps.  
Four different subdomain solvers are used:
an {\em exact} solve, a {\em symmetric} method consisting of two symmetric 
Gauss-Seidel iterations, a {\em nonsymmetric} method consisting of 
four Gauss-Seidel iterations, and, finally, a method using four
forward Gauss-Seidel iterations in the forward subdomain sweep and
using their {\em adjoint}, i.e., four backward Gauss-Seidel iterations,
in the backward subdomain sweep.  
The latter leads to an symmetric iteration; 
see Remark~\ref{remark:dd_mult_galerkin}.
Note that the cost of the three inexact subdomain solvers is identical.

Although apparently not as sensitive to operator symmetries as MG,
the same conclusions can be drawn for DD as for MG.
In particular, the symmetry penalty is seen for the pure DD results.
Lemma~\ref{lemma:penalty} is confirmed since two iterations in the 
column ``forw" are always more efficient than one iteration of the 
corresponding symmetrized method in column ``forw/back".
The CG results indicate that using minimal symmetry (the ``adjointed"
column) is a more effective approach than the fully symmetric
one (the ``symmetric" column).  
Again, the most robust acceleration is the Bi-CGstab one.

\subsubsection*{Additive multigrid}

Results obtained with an additive multigrid method are reported in 
Table~\ref{table:ex1_mg_add}.  The number and nature of the 
smoothing strategy is given in the first column of the table.

In the case of an unaccelerated additive method, the selection of 
a good damping parameter is crucial for convergence of the method.
We did not search extensively for an optimal parameter; a selection of 
$\omega=0.45$ seemed to provide good results in the case when the coarse
problem is variationally defined.  No $\omega$-value leading to
satisfactory convergence was found in the case when the coarse 
problem is obtained by discretization.
In the case of CG acceleration the observed convergence behavior was 
completely independent of the choice of $\omega$; 
see Remark~\ref{remark:dd_mult_galerkin}.
The symmetric methods ($\nu =  fb, ffbb, fbfb$) are accelerated very
well. Some of the nonsymmetric methods are accelerated too, especially
when the  number of smoothing steps is sufficiently large.
In the case of Bi-CGstab-acceleration, there appeared to 
be a dependence of convergence on $\omega$ (only with use of 
non-variational coarse problem).  In that case we took $\omega=1$.  
The overall best method appears to be the Bi-CGstab acceleration 
of the nonsymmetric multigrid method with a single forward Gauss-Seidel 
sweep on each grid-level.

\subsubsection*{Additive domain decomposition}

The results for additive DD are given in Table~\ref{table:ex1_dd_add}.
The subdomain solver is either an exact solver, a symmetric solver
based on two symmetric (forward/backward) Gauss-Seidel sweeps,
or a nonsymmetric solver based on four forward Gauss-Seidel iterations.

No value of $\omega$ was found that led to satisfactory 
convergence of the unaccelerated method. CG-acceleration performs
well when the linear method is symmetric; it performs less well
for the nonsymmetric method.
Again, the best overall method is the Bi-CGstab-acceleration of
the nonsymmetric additive solver.

\begin{table}[p]
\caption{Example 1: Multiplicative MG 
         with variational (discretized) coarse problem}
\begin{center}
\small
\begin{tabular}{|rl||rl|rl|rl|} \hline
$\nu_1$ & $\nu_2$ 
   & \multicolumn{2}{c|}{UNACCEL}  
   & \multicolumn{2}{c|}{CG}
   & \multicolumn{2}{c|}{Bi-CGstab} \\ \hline\hline
f       &    0    & 65&(DIV) & $\gg$100&($\gg$100) & 14& (16) \\ \hline \hline
f       &     b   & 55&(DIV) & 16      &(18)       & 10&(15) \\ \hline
f       &     f   & 40&(31)  & 30      &($\gg$100) &  9& (9) \\ \hline
ff      &     0   & 39&(48)  & $\gg$100&($\gg$100) &  8&(10) \\ \hline
fb      &     0   & 53&(DIV) & $\gg$100&($\gg$100) & 10&(11) \\ \hline
0       &    ff   & 39&(29)  & 29      &($\gg$100) &  8& (9) \\ \hline
0       &    fb   & 53&(DIV) & 17      &(99)       & 10&(12) \\ \hline \hline
fb      &    fb   & 34&(27)  & 12      &(13)       &  8& (8) \\ \hline
ff      &    bb   & 28&(18)  & 11      &(11)       &  7& (7) \\ \hline
ff      &    ff   & 24&(15)  & 12      &(12)       &  6& (6) \\ \hline
fff     &    f    & 24&(15)  & 17      &(27)       &  6& (6) \\ \hline
ffff    &    0    & 25&(17)  & $\gg$100&($\gg$100) &  7& (6) \\ \hline 
\end{tabular}
\end{center}
   \label{table:ex1_mg_mult}

\vspace{1cm}
\caption{Example 1: Multiplicative DD 
         with variational (discretized) coarse problem}
\begin{center}
\small
\begin{tabular}{|c|c||rl|rl|rl|} \hline
 Accel.   & subdomain solve & \multicolumn{2}{c|}{forw} & 
\multicolumn{2}{c|}{forw/back} & \multicolumn{2}{c|}{forw/forw} \\\hline\hline
 UNACCEL  & exact         & 40 & (42)   & 38 &(39)  & 20 &(21)  \\ \cline{2-8}
          & symmetric     & 279& (282)  & 146&(149) & 140&(141) \\ \cline{2-8}
          & adjointed     & -- &        & 110&(112) & 102&(103) \\ \cline{2-8}
          & nonsymmetric  & 189&  (191) & 102&(104) & 95 &(96)  \\ \hline
 CG       & exact         & $\gg$500& ($\gg$500)& 13&(13)&20&(20)\\ \cline{2-8}
          & symmetric     & 140 &  (56)  & 24&(24)&29&(27)\\ \cline{2-8}
          & adjointed     & --  &  --    & 21&(21)&25&(26)\\ \cline{2-8}
          & nonsymmetric  & 135 & (83)   & 22&(23)&28&(28)\\ \hline
 Bi-CGstab & exact        & 9  &(9)  & 9 &(9)    & 6 &(6)  \\ \cline{2-8}
          & symmetric     & 23 &(23) & 17&(16)   & 16&(16) \\ \cline{2-8}
          & adjointed     & -- & --  & 14&(14)   & 14&(13) \\ \cline{2-8}
          & nonsymmetric  & 19 &(20) & 13&(13)   & 13&(13) \\ \hline
\end{tabular}
\end{center}
   \label{table:ex1_dd_mult}

\vspace{1cm}
\caption{Example 1: Additive MG 
         with variational (discretized) coarse problem}
\begin{center}
\small
\begin{tabular}{|c||rl|rl|rl|} \hline
$\nu$  
   & \multicolumn{2}{c|}{UNACCEL}  
   & \multicolumn{2}{c|}{CG}
   & \multicolumn{2}{c|}{Bi-CGstab} \\ \hline\hline
f      & 175&($\gg$1000) & $\gg$100&($\gg$100) & 23&(52) \\ \hline\hline
ff     & 110&($\gg$1000) & 119     &(168)      & 19&(43) \\ \hline
fb     & 146&($\gg$1000) & 34      &(54)       & 23&(49) \\ \hline\hline
ffff   & 95 &($\gg$1000) & 28      &(67)       & 17&(37) \\ \hline 
ffbb   & 100&($\gg$1000) & 27      &(47)       & 17&(34) \\ \hline
fbfb   & 95 &($\gg$1000) & 28      &(48)       & 20&(43) \\ \hline
\end{tabular}
\end{center}
   \label{table:ex1_mg_add}
\end{table}
\begin{table}[t]
\caption{Example 1: Additive DD 
         with variational (discretized) coarse problem}
\begin{center}
\small
\begin{tabular}{|c||rl|rl|rl|} \hline 
subdomain solve 
   & \multicolumn{2}{c|}{UNACCEL}  
   & \multicolumn{2}{c|}{CG}
   & \multicolumn{2}{c|}{Bi-CGstab} \\ \hline\hline
exact        & $\gg$1000&($\gg$1000) & 34&(34) & 25&(27)       \\ \hline
symmetric    & $\gg$1000&($\gg$1000) & 57&(57) & 50&(49)       \\ \hline
nonsymmetric & $\gg$1000&($\gg$1000) & 69&(65) & 38&(41)       \\ \hline
\end{tabular}
\end{center}
   \label{table:ex1_dd_add}
\end{table}

\subsection{Example 2}

The second test problem is the Laplace equation on a semi-adapted
L-shaped domain, with Dirichlet boundary conditions chosen in such a way
that the equation has the following solution (in polar coordinates):
$$
u(r,\theta) =  \sqrt{r}~\sin(\theta/2)~,
$$
where the re-entrant corner in the domain is located at the origin.
Note that the one-point Gaussian quadrature rule which we employ to 
construct the stiffness matrix entries is an exact integrator here.
Hence, the variational conditions~(\ref{eqn:variational}) hold 
automatically between the fine space and all subdomain and coarse spaces for 
both the MG and the DD methods.

Figure~\ref{fig:ex2_mg_refine} shows a nested sequence of uniform mesh 
refinements used to formulate the MG methods. 
(A total of 5 mesh levels is used in the computation.)
Figure~\ref{fig:ex2_dd_subdomains} shows several overlapping subdomains 
constructed from a piece of the fine mesh of 9216 elements (4705 nodes)
overlaying the initial coarse mesh of 36 elements (25 nodes).

\begin{figure}[t]
\begin{center}
\begin{center}
\mbox{\myfigpdf{fig3}{1.0in}}
\end{center}
\caption{Example 2: Nested finite element meshes for MG.}
   \label{fig:ex2_mg_refine}

\vspace{1cm}
\mbox{\myfigpdf{fig4}{1.0in}}
\end{center}
\caption{Example 2: Overlapping subdomains for DD.}
   \label{fig:ex2_dd_subdomains}
\end{figure}

\subsubsection*{Multiplicative Methods}

The results for multiplicative MG are given in 
Table~\ref{table:ex2_mg_mult}, whereas the results for multiplicative DD
are given in Table~\ref{table:ex2_dd_mult}.
The results are similar to those for Example 1; in particular, imposing
minimal symmetry is the most effective CG-accelerated approach to the problem.
Employing the least symmetric linear method alone is the most effective
linear method, and the same nonsymmetric linear method yields the most 
effective Bi-CGstab-accelerated approach.

\subsubsection*{Additive Methods}

As for Example~1, in the case of the unaccelerated additive methods the 
selection of the damping parameter was crucial for convergence of the methods.
We did not search extensively for an optimal parameter; a selection of 
$\omega=0.45$ seemed to provide acceptable results for DD.
Note that improved convergence behavior might be obtained by allowing
different $\omega$ values for each subdomain solver 
(this will not be further investigated here).
No satisfactory value for $\omega$ was found for additive MG.
In the case of CG acceleration, the observed convergence behavior was 
completely independent of the choice of $\omega$.
The results for additive MG are given in Table~\ref{table:ex2_mg_add},
whereas the results for additive DD are given in Table~\ref{table:ex2_dd_add}.
The effect of the symmetry of the linear method's error propagator
on its convergence, and on the convergence behavior of CG and Bi-CGstab, was
as for Example 1.

\begin{table}[p]
\caption{Example 2: Multiplicative MG}
\begin{center}
\small
\begin{tabular}{|cc||c|c|c|} \hline
$\nu_1$ & $\nu_2$ 
       & UNACCEL
       & CG
       & Bi-CGstab \\ \hline\hline
f       &    0    & 33      & $\gg$100 & 11       \\ \hline \hline
f       &     b   & 23      & 12       & 7        \\ \hline
f       &     f   & 19      & 22       & 6        \\ \hline
ff      &     0   & 21      & $\gg$100 & 7        \\ \hline
fb      &     0   & 25      & $\gg$100 & 8        \\ \hline
0       &    ff   & 20      & 42       & 7        \\ \hline
0       &    fb   & 23      & 17       & 8        \\ \hline \hline
fb      &    fb   & 16      & 9        & 6        \\ \hline
ff      &    bb   & 15      & 9        & 5        \\ \hline
ff      &    ff   & 14      & 9        & 5        \\ \hline
fff     &    f    & 14      & 12       & 5        \\ \hline
ffff    &    0    & 16      & 36       & 5        \\ \hline 
\end{tabular}
\end{center}
   \label{table:ex2_mg_mult}

\vspace{1cm}
\caption{Example 2: Multiplicative DD}
\begin{center}
\small
\begin{tabular}{|c|c||c|c|c|} \hline
Accel.    & subdomain solve & forw      & forw/back & forw/forw \\ \hline\hline
 UNACCEL  & exact          & 73   & 60  & 37  \\ \cline{2-5}
          & symmetric      & 402  & 205 & 207 \\ \cline{2-5}
          & adjointed      & --   & 153 & 146 \\ \cline{2-5}
          & nonsymmetric   & 267  & 144 & 134 \\ \hline
 CG       & exact          & 116  & 17  & 17   \\ \cline{2-5}
          & symmetric      & 164  & 37  & 38   \\ \cline{2-5}
          & adjointed      & --   & 32  & 33   \\ \cline{2-5}
          & nonsymmetric   & 121  & 31  & 32   \\ \hline
 Bi-CGstab & exact          & 11   & 11  & 7     \\ \cline{2-5}
          & symmetric      & 37   & 25  & 26    \\ \cline{2-5}
          & adjointed      & --   & 22  & 23    \\ \cline{2-5}
          & nonsymmetric   & 27   & 21  & 21    \\ \hline
\end{tabular}
\end{center}
   \label{table:ex2_dd_mult}

\vspace{1cm}
\caption{Example 2: Additive MG}
\begin{center}
\small
\begin{tabular}{|c||c|c|c|} \hline
$\nu$  
       & UNACCEL
       & CG
       & Bi-CGstab \\ \hline\hline
f      & 91        & $\gg$1000 & 21       \\ \hline\hline
ff     & 62        & 31        & 16       \\ \hline
fb     & 74        & 29        & 18       \\ \hline\hline
ffff   & 126       & 25        & 14       \\ \hline 
ffbb   & 136       & 27        & 15       \\ \hline
fbfb   & 98        & 27        & 15       \\ \hline
\end{tabular}
\end{center}
   \label{table:ex2_mg_add}
\end{table}

\begin{table}[t]
\caption{Example 2: Additive DD}
\begin{center}
\small
\begin{tabular}{|c||c|c|c|} \hline
subdomain solve 
       & UNACCEL
       & CG
       & Bi-CGstab \\ \hline\hline
exact        & $\gg$1000 & 42 & 29       \\ \hline
symmetric    & $\gg$1000 & 86 & 56       \\ \hline
nonsymmetric & $\gg$1000 & 82 & 49       \\ \hline
\end{tabular}
\end{center}
   \label{table:ex2_dd_add}
\end{table}

\section{Concluding remarks}

In this paper, we developed framework for establishing sufficient conditions 
which guarantee that abstract multiplicative and additive 
Schwarz algorithms to yield self-adjoint positive definite preconditioners.
We then analyzed four specific methods: MG and DD methods, in both their 
additive and multiplicative forms. 
In all four cases, we used the general theory to establish sufficient 
conditions that guarantee the resulting preconditioner is SPD.
As discussed in 
Remarks~
\ref{remark:dd_mult_galerkin}, 
\ref{remark:mg_mult_galerkin}, 
\ref{remark:dd_add_galerkin}, 
and~\ref{remark:mg_add_galerkin}, 
the sufficient conditions for 
the theory, in the case of all four methods, are easily satisfied for 
non-variational, and even non-convergent methods.
The analysis shows that by simply taking some care in the way a Schwarz method 
is formulated, one can guarantee that the method is convergent when 
accelerated with the conjugate gradient method.

We also investigated the role of symmetry in linear methods and 
preconditioners.
A certain penalty lemma (Lemma~\ref{lemma:penalty}) was stated and proved, 
illustrating why symmetrizing is actually a bad idea for linear methods. 
It was conjectured that enforcing minimal symmetry in a linear preconditioner 
achieves the best results when combined with the conjugate gradient method,
and our numerical examples illustrate this behavior almost uniformly.
A sequence of experiments with two non-trivial test problems showed that 
the most efficient approach may be to abandon symmetry in the
preconditioner altogether, and to employ a nonsymmetric solver such 
as Bi-CGstab.
While acceleration with CG was strongly dependent on the symmetric nature 
of the preconditioner, Bi-CGstab always converged rapidly. 
In addition, BiCGstab appeared to benefit from the behavior predicted by
Lemma~\ref{lemma:penalty}, namely that a nonsymmetric linear preconditioner 
should have better convergence properties than its symmetrized form.

\section{Acknowledgments}

The authors thank the referees and Olof Widlund for several helpful comments.

\bibliographystyle{abbrv}
\bibliography{../bib/books,../bib/papers,../bib/mjh,../bib/library,../bib/ref-gn,../bib/coupling,../bib/pnp}


\vspace*{0.5cm}

\end{document}